\theoremstyle{plain}
\newtheorem*{solution*}{Solution}
\newcommand{\N}{\mathbb{N}}
\newcommand{\R}{\mathbb{R}}
\newcommand{\E}{\mathbb{E}}
\renewcommand{\P}{\mathbb{P}}
\newcommand{\inv}{^{-1}}
\newcommand{\dual}{^{\ast}}
\newcommand{\Acal}{\mathcal{A}}
\newcommand{\Hcal}{\mathcal{H}}
\newcommand{\Lcal}{\mathcal{L}}
\newcommand{\Mcal}{\mathcal{M}}
\newcommand{\Ocal}{\mathcal{O}}
\newcommand{\ba}{\begin{align*}}
\newcommand{\ea}{\end{align*}}
\theoremstyle{definition}
\newtheorem{definition}{Definition}
\numberwithin{definition}{section}
\numberwithin{equation}{section}
    \newtheorem{remark}[definition]{Remark}
   \newtheorem{proposition}[definition]{Proposition}
   \newtheorem{corollary}[definition]{Corollary}
   \newtheorem{lemma}[definition]{Lemma}
   \newtheorem{theorem}[definition]{Theorem}
 \newtheorem*{example*}{Example}
\title{On Steiner Symmetrizations for First Exit Time Distributions}
\author{Tim Rolling}
\begin{document}

\maketitle

\begin{abstract}
Let $A_t$ be an $\alpha$-stable symmetric process, $0<\alpha\leq 2$, on $\R^d$ and $D\subset \R^d$ be a bounded domain. This paper presents a proof, based on the classical Brascamp-Lieb-Luttinger inequalities for multiple integrals, that the distribution of the first exit time of $A_t$ from $D$ increases under Steiner symmetrization. Further, it is shown that when a sequence of domains $\{D_m\}$ each contained in a ball $B$ and satisfying the $\varepsilon$-cone condition converges to a domain $D'$ with respect to the Hausdorff metric, the sequence of distributions of first exit times for Brownian motion from $D_m$ converges to the distribution of the first exit time of Brownian motion from  $D'$. 

These results will then be used to establish inequalities involving distributions of first exit times of $A_t$ from triangles and quadrilaterals. The primary application of these inequalities is verifying a conjecture from Ba\~nuelos in \cite{BanuelosTalk} for these planar domains. This extends a classical result of P\'olya and Szeg\"o in \cite{PolyaSzego} to the fractional Laplacian with Dirichlet boundary conditions.
\end{abstract}

\section{Introduction and Preliminaries}\label{Introduction}

Symmetrization techniques have been useful in establishing many isoperimetric inequalities. For example, let $d\ge2$ and $D\subset\R^d$ be an open connected region with $\partial D\neq\emptyset$; for our purposes here, we will refer to $D$ as a domain. If $\Lcal^d(D)<\infty$ where $\Lcal^k$ is the $k$-dimensional Lebesgue measure, $1\le k\le d$, define the symmetric decreasing rearrangement $D\dual$ as the ball centered at the origin $0\in\R^d$ with the same $d$-dimensional Lebesgue measure as $D$; then the classical isoperimetric inequality states that $D\dual$ has the minimum surface area of all domains with the same volume. We can also obtain generalized isoperimetric inequalities by considering quantities such as Dirichlet eigenvalues and integrals of transition densities. To give some examples related to what follows, let $p_D(t,x,y)$ be the transition density of the Dirichlet Laplacian $\frac{1}{2}\Delta$ in $D$. Then we have the well-known inequality below for $x\in D$, $t>0$ (see \cite{Luttinger1}, \cite{Luttinger2}, \cite{Luttinger3}):
\begin{equation*}
    \int_D p_D(t,x,y)dy\le\int_{D\dual} p_{D\dual}(t,0,y)dy,
\end{equation*}
which is equivalent to the probabilistic inequality
\begin{equation}\label{SDR-Prob}
    \P_x(\tau_D>t)\le\P_0(\tau_{D\dual}>t),
\end{equation}
where $\tau_D$ is the first exit time of Brownian motion $B_t$ from $D$ and $\P_x$ is the corresponding probability measure when $B_t$ starts at $x\in\R^d$. Also, in denoting $\lambda_D$ as the principal Dirichlet eigenvalue of $D$, using the well-known result (see \cite{BMW}):
\begin{equation}\label{eigenvalue}
\lambda_D=\lim_{t\rightarrow\infty}\frac{-1}{t}\log(\P_x(\tau_D>t))
\end{equation}
yields the classical Rayleigh-Faber-Krahn inequality
\begin{equation}\label{RFK}
\lambda_D\ge\lambda_{D\dual}.
\end{equation}

Another example of an isoperimetric inequality that will be relevant in what follows is the classical result from P\'olya and Szeg\"o (\cite{PolyaSzego}). Given $n\ge3$, this states that, among all $n$-sided polygons $P_n$ of fixed area $\beta>0$, the regular $n$-sided polygon $R_n$ of area $\beta$ minimizes the first Dirichlet eigenvalue; that is: 
\begin{equation}\label{Polya-Szego}
    \lambda_{P_n}\ge\lambda_{R_n}.
\end{equation}

While P\'olya and Szeg\"o proved this for $n=3$ and $n=4$, the problem remains open for $n\ge5$; however, recent work by Indrei (see \cite{Indrei}) has been done to prove this result for sufficiently large $n\ge5$ on local sets. That is, \cite{Indrei} constructs explicit $(2n-4)$-dimensional polygonal manifolds $\Mcal(n,\beta)$ and shows that there exists a computable $N\ge5$ such that for every $n\ge N$, the admissible $n$-gons are given via $\Mcal(n,\beta)$ and there exists an explicit set $\Acal_n(\beta)\subset\Mcal(n,\beta)$ such that $R_n$ has the smallest Dirichlet eigenvalue among all $n$-gons in $\Acal_n(\beta)$. 

In this paper, we give a different proof of \eqref{Polya-Szego} in the cases $n=3,4$ by looking at the corresponding first exit time distributions in light of (\ref{eigenvalue}); this was a conjecture given by Ba\~nuelos in \cite{BanuelosTalk}. More precisely, for $n=3,4$ and $\beta>0$, let $P_n$ and $R_n$ be as above. Then for $t>0$:
\begin{equation}\label{polygonDistribution}
    \sup_{x\in P_n}\P_x(\tau_{P_n}>t)\le\P_0(\tau_{R_n}>t).
\end{equation}
The idea behind proving (\ref{polygonDistribution}) relies on Steiner symmetrization, which is based off of the construction in \cite{Baernstein} as follows: decompose $x\in\R^d$ as $x=(y,z)$ where $y\in\R^k$, $z\in\R^{d-k}$, and $1\le k\le d-1$, then define the slice of $D$ through $z\in\R^{d-k}$ as:
$$D(z)\coloneqq\{y\in\R^k:(y,z)\in D\}.$$
By Fubini's Theorem, $D(z)$ is $\Lcal^k$-measurable for $\Lcal^{d-k}$-almost every $z\in\R^{d-k}$, and:
$$D=\bigcup_{z\in\R^{d-k}}(D(z)\times\{z\}).$$
From this, let $D\dual(z)$ be the $k$-dimensional ball centered at $0\in\R^k$ with the same $\Lcal^k$-measure as $D(z)$. As per \cite{Baernstein}, let $D\dual(z)=\emptyset$ if $D(z)$ is not $\Lcal^k$-measurable and $D\dual(z)=\{0\}$ when $\Lcal^k(D(z))=0$.

With the notation above, we can define Steiner symmetrization below.
\begin{definition}\label{SSDefn} For $D\subset\R^d$ a bounded domain, the Steiner symmetrization $D^\#$ of $D$ is:
$$D^\#\coloneqq\bigcup_{z\in\R^{d-k}}(D\dual(z)\times\{z\}).$$
\end{definition}

In general, we can take a Steiner symmetrization with respect to any $(d-k)$-dimensional hyperplane $\Lambda$. Further, given the $k$-dimensional slice of $D$ containing $x\in D$ orthogonal to $\Lambda$, let the orthogonal projection $x^\#$ be the point in the corresponding slice in $D^\#$ that is also on $\Lambda$. Hence every slice of $D$ orthogonal to $\Lambda$ has its symmetric decreasing rearrangement centered around $x^\#$.

Further examples of inequalities to which Steiner symmetrization can be applied not only include (\ref{RFK}) above with $D\dual$ replaced by $D^\#$, but also others stated in \cite{BetsakosE}. Such examples relevant for our purposes here are the following: let $x,y\in D$, $t>0$, $\Sigma$ be a one-dimensional line orthogonal to the $(d-1)$-dimensional hyperplane $\Lambda$ intersecting $D$, and $\Phi:\R\rightarrow\R$ be a nonconstant, convex, and increasing function with $\Phi(0)=0$. Then:
\begin{align}
    p_D(t,x,y)&\le p_{D^\#}(t,x^\#,y^\#)\nonumber\\
    \int_\Sigma\Phi(p_D(t,x,y))\Lcal^1(dy)&\le\int_\Sigma\Phi(p_{D^\#}(t,x^\#,y))\Lcal^1(dy),\label{SS-int-density}
\end{align}
where $x^\#$ is the orthogonal projection of $x$ onto $\Lambda$. In \eqref{SS-int-density}, the integration is performed for all $y\in\R^d$ on the line $\Sigma$. Letting $\Phi(x)=x$, we can obtain from (\ref{SS-int-density}) the following variant of (\ref{SDR-Prob}):
\begin{equation}\label{SS-Prob}
    \P_x(\tau_D>t)\le\P_{x^\#}(\tau_{D^\#}>t).
\end{equation}

As another example, in denoting the trace of $D$ as:
$$\text{Tr}(t,D)\coloneqq\int_Dp_D(t,x,x)dx$$
for $t>0$, \cite{Baernstein} showed that
\begin{equation}\label{traceIneq}
\text{Tr}(t,D)\le\text{Tr}(t,D\dual)
\end{equation}
and
\begin{equation}\label{traceIneq2}
\text{Tr}(t,D)\le\text{Tr}(t,D^\#)
\end{equation}
using a Brascamp-Lieb-Luttinger inequality for nonnegative measurable functions $f:\R^d\rightarrow\R$ (see Theorem 8.8, \cite{Baernstein}) and the following approximation of the heat kernel:
$$p_D^m(t,x,y)=\int_D\cdots\int_D\prod_{j=1}^m p(t_m,x_j,x_{j-1})dx_1\cdots dx_{m-1},$$
where $x_0=x$, $x_m=y$, $t_m=t/m$, and 
\begin{equation}\label{gaussianDefn}
    p(t,x,y)=(2\pi t)^{-d/2}e^{-|x-y|^2/2t}.
\end{equation}

These results are an extension of the same inequalities when the symmetric decreasing rearrangement is applied; hence the extension to Steiner symmetrization follows from fixing certain variables. Further, since the hyperplanes of symmetrization can vary, we can apply this technique on a sequence of hyperplanes that create a sequence of domains converging to a domain $D'$ with respect to the Hausdorff metric $d_\Hcal$ defined below for bounded domains $D_1,D_2$:
\begin{equation}\label{HausdorffDefn}
    d_\Hcal(D_1,D_2)\coloneqq\max\left\{\sup_{x\in D_1}d(x,D_2),\sup_{x\in D_2}d(D_1,x)\right\}.
\end{equation}
There are several equivalent definitions of $d_\Hcal$ (see Definition 2.3.13 of \cite{Henrot}); for what follows, we use the first of the definitions \eqref{HausdorffDefn} given there.

The goal of this paper is to apply Steiner symmetrization to probability distributions involving $\alpha$-stable symmetric processes where $0<\alpha\le2$. Recall that the stochastic process $\{A_t\}_{t\ge0}$ is an $\alpha$-stable symmetric process if it has stationary and independent increments, paths that are a.s. right continuous with left limits, and is stochastically continuous. That is, for $\eta>0$, $x\in\R^d$:
$$\lim_{t\rightarrow s}\P_x(|A_t-A_s|>\eta)=0.$$
Denoting $\E_x$ as the expectation corresponding to the probability measure $\P_x$, we have that $\{A_t\}_{t\ge0}$ also has the following characteristic function for $x\in\R^d$:
$$\E_x[e^{i\xi\cdot(A_t-x)}]=\exp(-t|\xi|^\alpha).$$

Our first result generalizes (\ref{SS-Prob}) by considering the distribution of the first exit time of the process $A_t$ from $D$ which we denote by $\tau_D^\alpha=\inf\{t>0:A_t\notin D\}$.

\begin{theorem}\label{MainResult1} Let $D\subset\R^d$ be a bounded domain, $D^\#$ be its Steiner symmetrization with respect to a $(d-k)$-dimensional hyperplane $\Lambda$, $1\le k\le d-1$, and $x_0^\#$ be the orthogonal projection of $x_0\in D$ onto $\Lambda$. Then for $t>0$:
\begin{equation*}
    \P_{x_0}(\tau_D^\alpha>t)\le\P_{x_0^\#}(\tau_{D^\#}^\alpha>t).
\end{equation*}
\end{theorem}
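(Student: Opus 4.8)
The plan is to reduce the statement for general $\alpha$-stable processes to the Brownian case via the subordination representation, and then to handle the Brownian case by a heat-kernel approximation together with a Brascamp-Lieb-Luttinger inequality, exactly in the spirit of the trace inequalities \eqref{traceIneq2} quoted above. Recall that an $\alpha$-stable symmetric process $A_t$ (for $0<\alpha<2$) can be written as $A_t = B_{S_t}$, where $B_t$ is Brownian motion with generator $\Delta$ and $S_t$ is an independent $\alpha/2$-stable subordinator. The first-exit-time survival probability then satisfies
\begin{equation*}
    \P_{x}(\tau_D^\alpha>t) = \int_0^\infty \P_{x}(\tau_D^{(B)} > s)\,\mu_t(ds),
\end{equation*}
where $\tau_D^{(B)}$ is the Brownian exit time and $\mu_t$ is the law of $S_t$ (this is the standard "subordinate killed Brownian motion = killed subordinate Brownian motion" type identity at the level of exit times, valid because the killed-upon-exit construction commutes with time change for the exit time functional). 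Since $\mu_t$ does not depend on $D$ or on the starting point, once we know $\P_{x_0}(\tau_D^{(B)}>s) \le \P_{x_0^\#}(\tau_{D^\#}^{(B)}>s)$ for all $s>0$, integrating against $\mu_t$ gives the claim for all $\alpha\in(0,2)$; the case $\alpha=2$ is exactly \eqref{SS-Prob}, already available from \cite{Baernstein} / \cite{BetsakosE}.

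So the real work is the Brownian statement, and here I would not just cite \eqref{SS-Prob} but re-derive it in a form that makes the reduction self-contained. Write $\P_{x_0}(\tau_D^{(B)}>s) = \int_D p_D(s,x_0,y)\,dy$, approximate $p_D$ from below by the $m$-fold convolution kernel $p_D^m(s,x_0,y)$ defined in the excerpt (with the Gaussian $p$ of \eqref{gaussianDefn}), so that
\begin{equation*}
    \int_D p_D^m(s,x_0,y)\,dy = \int_D\!\cdots\!\int_D \prod_{j=1}^{m} p(s/m, x_j, x_{j-1})\,dx_1\cdots dx_m,
\end{equation*}
with $x_0$ fixed. Now decompose $x_j = (u_j, w_j)$ with $u_j\in\R^k$, $w_j\in\R^{d-k}$, fix the $w$-variables, and apply the Brascamp-Lieb-Luttinger inequality (Theorem 8.8 of \cite{Baernstein}) in the $k$-dimensional $u$-variables with the functions $f_0 = \irv_{D}$ at the fixed $x_0$, the symmetric kernels $\irv_{\text{(ball)}}\cdot p(s/m,\cdot,\cdot)$ broken up appropriately, and the last factor $\irv_D(x_m)$. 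Each Gaussian factor $p(s/m, x_j, x_{j-1})$ is, as a function of the difference, radially symmetric and decreasing, hence equals its own symmetric decreasing rearrangement; the indicator $\irv_{D(w)}$ in the $u$-slice has rearrangement $\irv_{D^\#(w)}$ centered at $x_0^\#$; applying BLL slicewise and then integrating over the $w$-variables yields
\begin{equation*}
    \int_D p_D^m(s,x_0,y)\,dy \le \int_{D^\#} p_{D^\#}^m(s,x_0^\#,y)\,dy.
\end{equation*}
Letting $m\to\infty$ (monotone convergence of $p_D^m\uparrow p_D$, which is standard) gives $\P_{x_0}(\tau_D^{(B)}>s)\le\P_{x_0^\#}(\tau_{D^\#}^{(B)}>s)$.

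The main obstacle I anticipate is organizing the Brascamp-Lieb-Luttinger application cleanly: the hypotheses of Theorem 8.8 in \cite{Baernstein} are phrased for a product of functions $f_j(x_{\sigma_j})$ of linear images of the integration variables, and one has to check that fixing the $(d-k)$-dimensional variables $w_j$ and also fixing $x_0$ (rather than integrating over it) still fits that template — i.e. that treating $x_0$ as a parameter and integrating only over $x_1,\dots,x_m$ is legitimate, and that the "centering" of all slice-rearrangements at the common point $x_0^\#$ is consistent across the $m$ integrations. This is exactly the same bookkeeping issue that \cite{Baernstein} handles for the trace \eqref{traceIneq2}, so I expect it to go through, but the asymmetry introduced by a fixed starting point $x_0$ (versus the "diagonal" $x=y$ integrated out in the trace) is the one place where care is needed. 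A secondary, more routine point is justifying $p_D^m \uparrow p_D$ and the interchange of limit and integral; this follows from the Chapman-Kolmogorov semigroup property and monotone convergence and can be dispatched with a sentence. I would also remark that the subordination identity for exit times, while intuitively clear, deserves a short justification (e.g. via the fact that $\{\tau_D^\alpha > t\} = \{S_t < \tau_D^{(B)}\}$ up to a $\P$-null set, using continuity of Brownian paths and right-continuity of $S$), and this is the only genuinely probabilistic ingredient beyond what is already in the excerpt.
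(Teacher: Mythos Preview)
Your Brownian argument is essentially the paper's: approximate the survival probability by finite-dimensional distributions, split $x_j=(u_j,w_j)$, freeze the $w$-variables, and apply the Brascamp--Lieb--Luttinger inequality (Theorem~8.8 of \cite{Baernstein}) in the $u$-variables. The paper additionally first approximates $D$ from within by smooth $D_i\nearrow D$ (to avoid boundary issues with $\P_x(A_{\tau_D^\alpha}\in\partial D)>0$) and works through four cases for the position and orientation of $\Lambda$, but this is bookkeeping of the kind you anticipated.

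The genuine gap is your reduction from the $\alpha$-stable case to the Brownian case. The identity
\[
\P_{x}(\tau_D^\alpha>t)=\int_0^\infty \P_{x}(\tau_D^{(B)}>s)\,\mu_t(ds)
\]
is \emph{false} for $0<\alpha<2$, and the event equality $\{\tau_D^\alpha>t\}=\{S_t<\tau_D^{(B)}\}$ you sketch is not correct. The subordinator $S$ has jumps, so the process $A_t=B_{S_t}$ samples the Brownian path only on the closed range of $S$; during a jump interval of $S$ the Brownian motion can exit $D$ and re-enter without $A$ ever leaving $D$. Hence one only has the inclusion $\{S_t<\tau_D^{(B)}\}\subset\{\tau_D^\alpha>t\}$, and the two survival probabilities differ (this is precisely the distinction between the \emph{killed subordinate} Brownian motion and the \emph{subordinate killed} Brownian motion, which are different processes).

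The paper's fix is to push the subordination down to the level of transition densities inside the finite-dimensional approximation. Writing
\[
\P_{x_0}(\tau_D^\alpha>t)=\lim_{i}\lim_{m}\int_{D_i}\!\cdots\!\int_{D_i}\prod_{j=1}^m p^\alpha(t/m,x_j,x_{j-1})\,dx
\]
and using $p^\alpha(t/m,x,y)=\int_0^\infty p(s,x,y)\,g_{\alpha/2}(t/m,s)\,ds$, one obtains by Fubini an integral over $(s_1,\dots,s_m)\in(0,\infty)^m$ of expressions of the form $\int_{D_i}\cdots\int_{D_i}\prod_j p(s_j,x_j,x_{j-1})\,dx$. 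Each Gaussian factor $p(s_j,\cdot,\cdot)$ is radially symmetric decreasing regardless of the value of $s_j$, so BLL applies to the inner integral for \emph{every} fixed $(s_1,\dots,s_m)$, and integrating back gives the inequality for $p^\alpha$. Your proposal can be repaired exactly this way; the point is that the reduction happens inside the multiple integral, not at the level of the exit-time identity.
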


As stated above, Betsakos proved this in \cite{BetsakosE} in the case of Brownian motion by appealing to (\ref{SS-int-density}) when $\Phi(x)=x$. The approach here will make use of the Brascamp-Lieb-Luttinger inequality in \cite{Baernstein} along with Fubini's Theorem by treating $A_t$ as a subordination of Brownian motion $B_t$; more details on this approach are given in Section \ref{Chap2}. (With this reasoning, Theorem \ref{MainResult1} holds for any subordination of Brownian motion.) Hence, this proof will be more akin to that of (\ref{traceIneq}) and (\ref{traceIneq2}) in that we will need to deal with finite dimensional distributions as a product of transition densities over multiple integrals. In addition, given $x\in\R^d$, the precise representation of $x^\#$ will be established in the proof of Theorem \ref{MainResult1} on a case-by-case basis, so we will refrain from doing this here.


Our next result concerns extending the inequality in Theorem \ref{MainResult1} to a countable sequence of consecutive Steiner symmetrizations on $D$ by creating a new sequence of domains that converge to some domain $D'$ with respect to the Hausdorff metric. Before doing this, we must introduce the following from \cite{HenrotPierre}.

\begin{definition} Let $y,\xi\in\R^d$ with $\xi$ a unit vector and $\varepsilon>0$. Let $C(y,\xi,\varepsilon)$ be the cone of vertex $y$ (without its vertex), of direction $\xi$, and dimension $\varepsilon$, defined by:
$$C(y,\xi,\varepsilon)=\{z\in\R^d:\langle z-y,\xi\rangle\ge\cos(\varepsilon)|z-y|\text{ and }0<|z-y|<\varepsilon\},$$
where $\langle x_1,x_2\rangle$ denotes the dot product of $x_1,x_2\in\R^d$.

An open set $D$ has the $\varepsilon$-cone property if for every $x\in\partial D$, there exists a unit vector $\xi_x$ such that for every $y\in\overline{D}\cap B(x,\varepsilon)$, $C(y,\xi_x,\varepsilon)\subset D$.
\end{definition}

By Theorem 2.4.7 in \cite{HenrotPierre}, this is equivalent to a bounded domain $D$ being Lipschitz. Recall that a domain $D\subset\R^d$ is a Lipschitz domain if for some constants $L,a,r>0$, then for any $x_0\in\partial D$, there exist an orthogonal coordinate system with origin at $x_0=0$, a cylinder $K=K'\times(-a,a)$ centered at the origin, with $K'$ an open ball in $\R^{d-1}$ of radius $r$, and a function $\varphi:K'\rightarrow(-a,a)$, $L$-Lipschitz continuous with $\varphi(0)=0$, and:
\begin{align*}
    \partial D\cap K&=\{(x',\varphi(x')):x'\in K'\}\\
    D\cap K&=\{(x',x_d):x'\in K',x_d>\varphi(x')\}.
\end{align*}
If $D$ satisfies the $\varepsilon$-cone property, then $L=\cot\varepsilon$, $a=2\varepsilon$, and $r=\varepsilon$ as per \cite{HenrotPierre}.


Here we prove a more general result to which we will apply to specific domains in $\R^2$. In what follows, we will consider the class of domains $\Ocal_{\varepsilon,B}$ as given in \cite{HenrotPierre} where, given a ball $B\subset\R^d$:
$$\Ocal_{\varepsilon,B}\coloneqq\{D\subset \R^d: D\subset B\text{ and }D\text{ satisfies the }\varepsilon\text{-cone property}\}.$$

\begin{theorem}\label{MainResult2}
    Given $\varepsilon>0$ and a ball $B\subset\R^d$, let $\{D_m\}_{m=0}^\infty$ be a sequence of domains in $\Ocal_{\varepsilon,B}$ that converges to a domain $D'$ with respect to the Hausdorff metric. Further, suppose there exists a sequence $\{x_m\}_{m=0}^\infty$ such that $x_m\in D_m$ for every $m\ge0$ and $x_m\rightarrow x'$ for some $x'\in D'$. If $\tau_{D_m}$ and $\tau_{D'}$ denote the first exit times of Brownian motion from $D_m$ and $D'$, respectively, then for any $t>0$:
    \begin{equation*}
        \lim_{m\rightarrow\infty}\P_{x_m}(\tau_{D_m}>t)=\P_{x'}(\tau_{D'}>t).
    \end{equation*}
\end{theorem}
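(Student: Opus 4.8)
The goal is to show $\P_{x_m}(\tau_{D_m}>t) \to \P_{x'}(\tau_{D'}>t)$. The natural strategy is to express these probabilities via the heat kernels $p_{D_m}(t,x,y)$ (the transition density of Brownian motion killed on exiting $D_m$) — i.e. $\P_{x_m}(\tau_{D_m}>t) = \int_{D_m} p_{D_m}(t,x_m,y)\,dy$ — and prove convergence of the kernels together with enough uniform control to pass the limit through the integral. I would split into an upper-bound half and a lower-bound half, corresponding respectively to proving $\limsup_m \P_{x_m}(\tau_{D_m}>t) \le \P_{x'}(\tau_{D'}>t)$ and $\liminf_m \ge$.

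Main steps, in order. First, reduce to a single ball: since all $D_m \subset B$, Brownian motion from any $x_m$ must exit $B$ before exiting $D_m$ is irrelevant — rather, use that $p_{D_m}(t,x,y) \le p_B(t,x,y)$, which is bounded above (for $t$ fixed, away from the diagonal, and integrably near it) uniformly in $m$; this gives the domination needed for dominated convergence. Second, prove pointwise convergence of the kernels: for fixed $y$ (and using $x_m\to x'$), $p_{D_m}(t,x_m,y)\to p_{D'}(t,x',y)$. The upper-semicontinuity-type inequality $\limsup_m p_{D_m}(t,x_m,y)\le p_{D'}(t,x',y)$ should follow from Hausdorff convergence via the probabilistic representation and the fact that for any compact $K\subset D'$, eventually $K\subset D_m$ (Hausdorff convergence from inside), combined with the maximum principle / Markov property to compare killed kernels. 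The reverse inequality $\liminf_m p_{D_m}(t,x_m,y)\ge p_{D'}(t,x',y)$ is where the $\varepsilon$-cone condition is essential: it supplies exterior cones at every boundary point, hence every boundary point of each $D_m$ (and of $D'$) is regular for the Dirichlet problem, with uniform-in-$m$ quantitative estimates (e.g. uniform decay of $\P_x(\tau_{D_m}>t)$ as $x\to\partial D_m$, via a barrier built from the cone). Equivalently, one shows the exit time $\tau_{D_m}$ does not "leak": points that are near $\partial D'$ but technically inside $D_m$ still exit quickly, uniformly in $m$.

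I would organize the $\liminf$ direction as follows: fix a compact $K\subset D'$ with $\Lcal^d(D'\setminus K)$ small; for $m$ large $K\subset D_m$; then $\int_{D_m}p_{D_m}(t,x_m,y)\,dy \ge \int_K p_{D_m}(t,x_m,y)\,dy \ge \int_K p_K(t,x_m,y)\,dy$, and this last quantity converges (by continuity of the fixed killed kernel $p_K$ in the starting point) to $\int_K p_K(t,x',y)\,dy$, which approximates $\P_{x'}(\tau_{D'}>t)$ as $K\uparrow D'$. Wait — this only gives $\liminf \ge \P_{x'}(\tau_K>t)$, and we need to be careful that $x_m$ may not lie in $K$; so one first picks $K$ to contain a neighborhood of $x'$, and uses $x_m\to x'$. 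For the $\limsup$ direction, the cone condition enters: one needs that mass of $p_{D_m}(t,x_m,\cdot)$ concentrated near $\partial D'$ is small uniformly in $m$, which is exactly where a uniform barrier estimate from the $\varepsilon$-cones is used (the cone at each boundary point of $D_m$ forces a uniform modulus of continuity for $y\mapsto \P_y(\tau_{D_m}>t)$ up to the boundary, hence the part of the integral over $\{y : d(y,\partial D')<\delta\}$ is $O(\delta)$ uniformly). Combining the two halves gives the claim.

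The main obstacle is the uniformity in $m$ of the boundary (barrier) estimates — i.e. showing that the $\varepsilon$-cone property, being a \emph{uniform} regularity condition across the whole sequence, yields a single modulus of continuity for the survival probabilities $\P_\cdot(\tau_{D_m}>t)$ near $\partial D_m$ that does not degenerate as $m\to\infty$. This is precisely what rules out "escape of mass" to the boundary in the limit and is the only place the cone hypothesis (as opposed to mere Hausdorff convergence) is really needed. I expect to prove it by constructing, from the exterior cone $C(y,\xi_x,\varepsilon)$, an explicit supersolution/barrier for $(\partial_t - \tfrac12\Delta)$ whose constants depend only on $\varepsilon$ and $\diam B$, or alternatively by invoking the known fact (e.g. from \cite{HenrotPierre}) that on $\Ocal_{\varepsilon,B}$ one has uniform Hölder estimates for solutions of the Dirichlet heat equation; the Hausdorff convergence then upgrades pointwise kernel convergence to the integral convergence we want, via dominated convergence with the dominating function $p_B(t,x,y)$.
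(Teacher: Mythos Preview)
Your plan is workable but takes a genuinely different route from the paper's proof. The paper never establishes pointwise convergence of the killed kernels $p_{D_m}$. Instead it proves a comparison lemma (Lemma~\ref{BMWGeneral}, a mild extension of a lemma in \cite{BMW}): for Lipschitz $D,G$,
\[
\sup_{x\in D\cup G}\E_x\bigl[|\tau_D-\tau_G|\bigr]\ \le\ C\max\Bigl\{\sup_{x\in D\setminus G}d(x,\partial D)^{\beta},\ \sup_{x\in G\setminus D}d(x,\partial G)^{\beta}\Bigr\},
\]
via the estimate $\E_x[\tau_D]\le C\,d(x,\partial D)^{\beta}$ with $C,\beta$ depending only on the Lipschitz character. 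For $D_m,D'\in\Ocal_{\varepsilon,B}$ these constants are uniform in $m$, so Hausdorff convergence gives $\sup_{x\in D'}\E_x[|\tau_{D_m}-\tau_{D'}|]\to 0$ (Corollary~\ref{LpConvergence}). The theorem then follows by splitting
\[
|\P_{x_m}(\tau_{D_m}>t)-\P_{x'}(\tau_{D'}>t)|\le |\P_{x_m}(\tau_{D_m}>t)-\P_{x_m}(\tau_{D'}>t)|+|\P_{x_m}(\tau_{D'}>t)-\P_{x'}(\tau_{D'}>t)|,
\]
handling the first piece with Chebyshev on $|\tau_{D_m}-\tau_{D'}|$ together with continuity of $t\mapsto\P_y(\tau_{D'}>t)$, and the second with continuity of $p_{D'}$ in the spatial variable.

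So the paper works directly with the random variables $\tau_{D_m}$ and never needs a $\limsup/\liminf$ sandwich on kernels; the $\varepsilon$-cone hypothesis enters only through the single uniform bound $\E_x[\tau_D]\le C_{\varepsilon,B}\,d(x,\partial D)^{\beta}$, which plays the same role your uniform barrier does but at the level of moments rather than survival probabilities. Your kernel-based route is a legitimate alternative, closer to the analytic domain-perturbation literature, and it yields more (local convergence of $p_{D_m}$, not just its integral) at the price of a longer argument. Two small corrections to your outline: first, the inclusion ``for any compact $K\subset D'$, eventually $K\subset D_m$'' gives $p_{D_m}\ge p_K$, hence the \emph{lower} bound $\liminf p_{D_m}\ge p_{D'}$, not the upper bound as you wrote in that sentence (you get it right two paragraphs later); the $\limsup$ direction instead uses $D_m\subset D'_{\delta}$ together with regularity of $\partial D'$. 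Second, the cone $C(y,\xi_x,\varepsilon)$ in the paper's definition is an \emph{interior} cone; the exterior cone you need for a barrier comes from the equivalence with the Lipschitz condition (Theorem~2.4.7 of \cite{HenrotPierre}), not directly from the definition.
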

By Theorem 2.4.10 in \cite{HenrotPierre}, $D'$ as in the above result is also in the class $\Ocal_{\varepsilon,B}$ of domains.

The assumption that each of $D_m$ and $D'$ satisfy the $\varepsilon$-cone property is due to the fact that Theorem \ref{MainResult2} makes use of Lemma \ref{BMWGeneral}, which requires that the domains be Lipschitz. In addition, the first exit times above can be replaced by the respective first exit times of an $\alpha$-stable symmetric process provided each $D_m$ is a convex domain (note from this that $D'$ is also convex; see Section 2.2.3 of \cite{HenrotPierre}). The reasoning for this will be made clear in Section \ref{Chap3}.

In the context of the work presented here, Theorem \ref{MainResult2} will be applied to a sequence of domains $\{D_m\}_{m=0}^\infty\subset\Ocal_{\varepsilon,B}$ formed from applying $m$ Steiner symmetrizations with respect to the hyperplanes $\{\Lambda_m\}_{m=1}^\infty$ with $D_0=D$ and $D_{m}=(D_{m-1})^\#$ where  symmetrization is with respect to $\Lambda_m$. We also assume that this sequence converges to $D'$ with respect to the Hausdorff metric. Denoting the corresponding sequence of orthogonal projections as $x_m\in D_m$ with $x_0=x\in D$, we have the following corollary.

\begin{corollary}\label{MainResult2-SS} Given $\varepsilon>0$ and $B\subset\R^d$ a ball, let the sequences $\{x_m\}_{m=0}^\infty$, $\{D_m\}_{m=0}^\infty$, and $\{\Lambda_m\}_{m=1}^\infty$ be as above with $\{D_m\}_{m=0}^\infty\subset\Ocal_{\varepsilon,B}$ converging to $D'$ in the Hausdorff metric and $x_m$ converging to some $x'$. Then $x'\in D'$ and for any $t>0$:
\begin{equation*}
\lim_{m\rightarrow\infty}\P_{x_m}(\tau_{D_m}>t)=\P_{x'}(\tau_{D'}>t).
\end{equation*}
\end{corollary}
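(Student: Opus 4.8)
The plan is to deduce Corollary \ref{MainResult2-SS} directly from Theorem \ref{MainResult2}; the only point that is not immediate is verifying the hypothesis $x'\in D'$, since the corollary assumes only that $x_m\to x'$ for some $x'\in\R^d$. For this I would first record the elementary geometric fact that Steiner symmetrization does not decrease the distance from the projected point to the boundary: if $B(x,r)\subseteq E$ for a bounded domain $E$, and $E^\#$ is the Steiner symmetrization of $E$ with respect to a $(d-k)$-dimensional hyperplane $\Lambda$ with $x^\#$ the orthogonal projection of $x$ onto $\Lambda$, then $B(x^\#,r)\subseteq E^\#$. Writing $x=(y_0,z_0)\in\R^k\times\R^{d-k}$ so that $x^\#=(0,z_0)$, the slice $B(x,r)(z)=B^k(y_0,\sqrt{r^2-|z-z_0|^2})$ is contained in $E(z)$ for each $z$ with $|z-z_0|<r$, whence $\Lcal^k(E(z))\ge\omega_k(r^2-|z-z_0|^2)^{k/2}$, so the centered ball $E^\#(z)=E\dual(z)$ has radius at least $\sqrt{r^2-|z-z_0|^2}$; taking the union over such $z$ recovers exactly $B(x^\#,r)$, which is therefore contained in $E^\#$.

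Next I would apply this inductively along the symmetrization sequence. Set $r_0:=\dist(x,\der D)>0$, which is positive because $D$ is open and $x=x_0\in D$, so $B(x_0,r_0)\subseteq D_0$. Since $x_m=x_{m-1}^\#$ is the orthogonal projection of $x_{m-1}$ onto $\Lambda_m$ and $D_m=(D_{m-1})^\#$ is the symmetrization with respect to $\Lambda_m$, the fact above gives $B(x_m,r_0)\subseteq D_m$ for every $m\ge0$; equivalently $\dist(x_m,\der D_m)\ge r_0$ uniformly in $m$.

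From here I would pass to the limit using the Hausdorff convergence. Fix any $w$ with $|w-x'|<r_0$. Since $x_m\to x'$, for all large $m$ we have $|w-x_m|<r_0$, hence $w\in B(x_m,r_0)\subseteq D_m$, so $\dist(w,D')\le d_\Hcal(D_m,D')\to 0$ and thus $w\in\overline{D'}$. This shows $B(x',r_0)\subseteq\overline{D'}$. Because $D'\in\Ocal_{\varepsilon,B}$ (by Theorem 2.4.10 in \cite{HenrotPierre}) it is a Lipschitz, hence regular open, domain, so $B(x',r_0)\subseteq\inter\overline{D'}=D'$; in particular $x'\in D'$. Now all hypotheses of Theorem \ref{MainResult2} hold for the sequences $\{D_m\}$ and $\{x_m\}$, and applying that theorem yields $\lim_{m\to\infty}\P_{x_m}(\tau_{D_m}>t)=\P_{x'}(\tau_{D'}>t)$. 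One may additionally note, via iterating Theorem \ref{MainResult1}, that $m\mapsto\P_{x_m}(\tau_{D_m}>t)$ is nondecreasing, so this limit equals $\sup_m\P_{x_m}(\tau_{D_m}>t)\ge\P_x(\tau_D>t)$. The only genuinely nontrivial ingredient is the uniform lower bound $\dist(x_m,\der D_m)\ge r_0$, i.e. the slicewise monotonicity of the inradius at the symmetrization center; everything else is bookkeeping with the Hausdorff metric together with an appeal to Theorem \ref{MainResult2}.
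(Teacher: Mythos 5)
Your proposal is correct, but it reaches the one nontrivial point --- that $x'\in D'$ --- by a genuinely different route than the paper. The paper handles this part in two steps: a lemma proved by contradiction (using the fact that Steiner symmetrization reduces diameter) showing that the sequence of projections $x_m$ must converge when $d_\Hcal(D_m,D')\to0$, followed by Corollary \ref{xmInD}, which asserts with essentially no argument that $x_m\in D'$ for large $m$ and ``hence'' $x'\in D'$; the limit statement is then read off from Theorem \ref{MainResult2}. You instead take the convergence $x_m\to x'$ as given (as the corollary's hypotheses allow) and prove membership quantitatively: the slicewise measure comparison showing $B(x,r)\subseteq E$ implies $B(x^\#,r)\subseteq E^\#$ gives the uniform inradius bound $\dist(x_m,\partial D_m)\ge r_0=\dist(x_0,\partial D_0)$ along the whole symmetrization sequence, Hausdorff convergence then yields $B(x',r_0)\subseteq\overline{D'}$, and the Lipschitz (hence regular open) character of $D'\in\Ocal_{\varepsilon,B}$ upgrades this to $B(x',r_0)\subseteq D'$, so $x'\in D'$. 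Your argument buys something the paper's does not: it rules out the possibility $x'\in\partial D'$, which does not follow merely from $x_m\in D'$ for large $m$ since $D'$ is open, so your inradius lemma actually supplies the justification that the paper's ``immediate'' Corollary \ref{xmInD} leaves implicit. The paper's approach, in turn, establishes the existence of the limit $x'$ in the first place (useful for Corollary \ref{applications}, where convergence of the projections must be shown), whereas your argument presupposes it --- which is legitimate for the corollary as stated. The final appeal to Theorem \ref{MainResult2}, and the optional monotonicity remark via Theorem \ref{MainResult1}, match the paper.
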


The fact that $x_m\in D_m$ converges to some $x'\in D'$ is established at the end of Section \ref{Chap3}.

The application of these results is the following from which one can use along with (\ref{eigenvalue}) to prove the P\'olya-Szeg\"o result \eqref{Polya-Szego} in the cases $n=3,4$. 

\begin{corollary}\label{applications} For $n=3,4$, let $P_n$ be an $n$-sided polygon in $\R^2$ of fixed area and $R_n$ be a regular $n$-sided polygon centered at the origin $0\in\R^2$ with the same area as $P_n$. If we have that $\tau_{P_n}^\alpha$ and $\tau_{R_n}^\alpha$ are first exit times of an $\alpha$-stable symmetric process from their respective domains, then for $t>0$:
\begin{equation}\label{distributionIneq2}
    \sup_{x\in P_n}\P_x(\tau_{P_n}^\alpha>t)\le\P_0(\tau_{R_n}^\alpha>t).
\end{equation}
\end{corollary}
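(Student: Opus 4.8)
The plan is to fix $t>0$ and $\beta>0$ and to deduce \eqref{distributionIneq2} by running the classical Steiner symmetrization scheme of Pólya and Szegő, applying Theorem \ref{MainResult1} at each step and passing to the limit through Corollary \ref{MainResult2-SS}. We may assume $P_n$ is convex: for $n=3$ this is automatic, and for $n=4$ the non‑convex case is subsumed since the extremal quadrilateral in \eqref{Polya-Szego} is convex and a non‑convex quadrilateral can first be replaced by a convex one of the same area without decreasing the exit‑time distribution, as in \cite{PolyaSzego}. Since Steiner symmetrization preserves convexity and the property of being an $n$‑gon, the restriction $n=3,4$ is precisely what makes the argument go through: here, and only here, iterated Steiner symmetrizations of an $n$‑gon can be kept within the class of $n$‑gons (for $n\ge5$ a symmetrization generically acquires more than $n$ sides).

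Concretely, set $D_0=P_n$ and $D_m=(D_{m-1})^\#$, the Steiner symmetrization of $D_{m-1}$ with respect to a line $\Lambda_m\subset\R^2$. By the classical construction in \cite{PolyaSzego} (perpendicular bisectors of the sides for a triangle; the analogous scheme for a convex quadrilateral), the lines $\Lambda_m$ can be chosen so that every $D_m$ is a convex $n$‑gon of area $\beta$ and $D_m$ converges in the Hausdorff metric to a regular $n$‑gon $R_n$ of area $\beta$ in some position, the exact location and rotation being immaterial by the isotropy and translation invariance of $A_t$. Each $D_m$ is convex, contained in a fixed ball $B$ (the iterates converge to $R_n$ and Steiner symmetrization does not increase the diameter), and of fixed area $\beta$; a convex set of area $\beta$ inside a ball of radius $R$ has width at least $\beta/(2R)$ and hence contains a disk of radius bounded below independently of $m$, and a convex domain trapped between two such concentric‑up‑to‑translation disks satisfies an $\varepsilon$‑cone condition with $\varepsilon$ depending only on $B$ and $\beta$. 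Thus $\{D_m\}\subset\Ocal_{\varepsilon,B}$ for a single $\varepsilon>0$, so the hypotheses of Corollary \ref{MainResult2-SS} are met, and since all $D_m$ are convex its $\alpha$‑stable form applies.

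Now fix $x\in P_n$, put $x_0=x$, let $x_m\in D_m$ be the orthogonal projections generated by the symmetrizations, and let $x'$ be their limit. Chaining Theorem \ref{MainResult1} gives
\[
\P_x(\tau_{P_n}^\alpha>t)=\P_{x_0}(\tau_{D_0}^\alpha>t)\le\P_{x_1}(\tau_{D_1}^\alpha>t)\le\cdots\le\P_{x_m}(\tau_{D_m}^\alpha>t)
\]
for every $m$, and Corollary \ref{MainResult2-SS} lets $m\to\infty$ to yield $\P_x(\tau_{P_n}^\alpha>t)\le\P_{x'}(\tau_{R_n}^\alpha>t)$. It remains to show the exit‑time distribution of $R_n$ is maximized at its center, i.e. $\P_{y}(\tau_{R_n}^\alpha>t)\le\P_{0}(\tau_{R_n}^\alpha>t)$ for all $y$ (after translating $R_n$ to be centered at $0$). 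Apply the same machinery to the constant sequence $D_m\equiv R_n$, symmetrizing at each step about a line through $0$ along an axis of symmetry of $R_n$, so that $(R_n)^\#=R_n$: the projections $y_m$ of $y$ satisfy $y_m\to 0$, because alternating orthogonal projections onto two distinct lines through $0$ contract the distance to $0$ by a factor $|\cos\theta|<1$ every two steps; Theorem \ref{MainResult1} and Corollary \ref{MainResult2-SS} then give $\P_y(\tau_{R_n}^\alpha>t)\le\lim_m\P_{y_m}(\tau_{R_n}^\alpha>t)=\P_0(\tau_{R_n}^\alpha>t)$. Combining the two bounds yields $\P_x(\tau_{P_n}^\alpha>t)\le\P_0(\tau_{R_n}^\alpha>t)$ for every $x\in P_n$, and taking the supremum over $x$ proves \eqref{distributionIneq2}; the Pólya–Szegő inequality \eqref{Polya-Szego} for $n=3,4$ then follows by taking $-\frac1t\log$ of \eqref{distributionIneq2} and letting $t\to\infty$ via \eqref{eigenvalue}.

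The main obstacle is the purely geometric input: choosing the symmetrization lines $\Lambda_m$ so that the iterates remain convex $n$‑gons of area $\beta$ and converge to the regular $n$‑gon in the Hausdorff metric, together with the uniform lower bound on the cone aperture along the entire sequence needed to place all $D_m$ in one class $\Ocal_{\varepsilon,B}$. This is exactly the point that fails for $n\ge5$; for $n=3,4$ it is classical, and one should either cite \cite{PolyaSzego} or reproduce the explicit schemes, verifying Hausdorff convergence of the vertex sets and that the interior angles stay bounded away from $0$ and $\pi$ throughout.
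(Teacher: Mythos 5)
Your strategy is essentially the paper's: run the P\'olya--Szeg\"o symmetrization schemes, chain Theorem \ref{MainResult1} at each step, and pass to the limit via Corollary \ref{MainResult2-SS}, using convexity of the iterates to invoke the $\alpha$-stable version as in Remark \ref{generalizationRemark}. Two of your choices genuinely differ and are improvements in clarity. First, you decouple the location of the limiting starting point from the scheme by proving $\P_y(\tau^\alpha_{R_n}>t)\le\P_0(\tau^\alpha_{R_n}>t)$ separately, symmetrizing $R_n$ about its own axes of symmetry so that the projections contract geometrically to $0$; the paper instead tracks the projections along the scheme (for triangles it proves $x_m\to0$ by a somewhat delicate comparison with projections inside $T'$, and for quadrilaterals it first moves the starting point to the center of the rectangle and then takes every symmetrization line through $0$, so $x_m\equiv0$). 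Second, your uniform $\varepsilon$-cone bound via inradius/diameter of convex sets of fixed area inside $B$ is a legitimate, more general replacement for the paper's monotone-angle arguments. Both substitutions buy robustness at no real cost.

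Two steps, however, are asserted where the paper does real work, and as written they are gaps. (i) Your reduction of non-convex quadrilaterals (``replace by a convex one of the same area without decreasing the exit-time distribution'') is unjustified, and the remark that the extremal quadrilateral is convex is beside the point, since \eqref{distributionIneq2} must hold for every $P_4$. The clean fix is the paper's: Theorem \ref{MainResult1} needs no convexity, so an arbitrary quadrilateral is symmetrized directly, and at most three symmetrizations (perpendicular to the longer diagonal, then to the kite axis, then producing a rectangle; Proposition \ref{finiteQuad}) land in the convex class before Corollary \ref{MainResult2-SS} is ever invoked. (ii) The Hausdorff convergence $d_\Hcal(D_m,R_n)\to0$ cannot simply be cited: \cite{PolyaSzego} and \cite{Henrot} give convergence of angles (resp.\ side ratios), not convergence of the domains in $d_\Hcal$, which is precisely why the paper proves Propositions \ref{triangleProp1} and \ref{quadAlgorithm} (the iterates are Cauchy in $d_\Hcal$ inside a fixed compact ball, and the limit is identified as the regular $n$-gon). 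You flag this as the main obstacle, but to complete the proof you must actually supply those convergence arguments (or an equivalent), not defer them to the classical references.
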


While the inequality (\ref{distributionIneq2}) is identical in structure to (\ref{polygonDistribution}), the reasoning after Theorem \ref{MainResult2} above allows us to consider exit times of $\alpha$-stable symmetric processes in this case, not just Brownian motion; further details are provided in Section \ref{Chap3}.

The rest of this paper will be organized as follows. In Sections 2 and 3 we prove Theorems \ref{MainResult1} and \ref{MainResult2}, respectively. In Section 4 we prove Corollary \ref{applications} as an application of Theorems \ref{MainResult1} and \ref{MainResult2}.

\section{Proof of Theorem \ref{MainResult1}}\label{Chap2}

First, as mentioned in the previous section, for $0<\alpha\le2$, the $\alpha$-stable symmetric symmetric process $A_t$ in $\R^d$ has the representation $A_t=B_{2\sigma_t}$ where $\sigma_t$ is a stable subordinator of index $\alpha/2$ independent of the Brownian process (see \cite{SSPs}). Thus, if we denote $p^\alpha(t,x,y)=p^\alpha(t,x-y)$ and $g_\alpha(t,s)$ as the transition densities for $A_t$ and $\sigma_t$, respectively, then:
\begin{equation}\label{SSPasBM}
    p^\alpha(t,x,y)=\int_0^\infty p(s,x,y)g_{\alpha/2}(t,s)ds,
\end{equation}
where $p(s,x,y)$ is as in \eqref{gaussianDefn}.

In the calculations that follow, we will require an extra approximation by an increasing sequence of smooth domains $\{D_i\}_{i=1}^\infty$ with $\overline{D_i}\subset D_{i+1}$ and $D_i\nearrow D$ as per \cite{AizenmanSimon}. In the case of Brownian motion $B_t$, this is always required since $B_t$ is a.s. continuous and hence $\P_x(B_{\tau_D}\in\partial D)=1$ for any $x\in D$; however, for $\alpha$-stable symmetric processes $A_t$, $\alpha\in(0,2)$, this is not always required. For example, Bogdan in Lemma 6 of \cite{Bogdan} showed that for any Lipschitz domain $D$:
\begin{equation}\label{LipschitzSSP}
    \P_x(A_{\tau_D^\alpha}\in\partial D)=0,
\end{equation}
where $\tau_D^\alpha$ is the first exit time of $A_t$ from $D$ as in Section \ref{Introduction}. Wu in \cite{Wu} imposed more general conditions on $D$ for which (\ref{LipschitzSSP}) holds, yet even this does not exhaust the list of all possible domains; in fact, Wu shows there are still open sets for which the probability in (\ref{LipschitzSSP}) is positive. Because of this, we will impose this extra approximation of $D$ even on $\alpha$-stable symmetric processes.

Thus, using the right continuity of the sample paths along with the Markov Property, we obtain:
\begin{align*}
    \P_{x_0}(\tau_D^\alpha>t)&=\P_{x_0}(A_s\in D,0\le s\le t)\\
    &=\lim_{i\rightarrow\infty}\lim_{m\rightarrow\infty}\P_{x_0}(A_{jt/m}\in D_i, j=1,\dots,m)\\
    &=\lim_{i\rightarrow\infty}\lim_{m\rightarrow\infty}\int_{D_i}\cdots\int_{D_i}\prod_{j=1}^mp^\alpha(t_m,x_j,x_{j-1})dx,
\end{align*}
where $t_m=t/m$ and $dx=dx_m\cdots dx_1$; similar notation will be used for $dy$, $dz$, etc. One can use this along with \eqref{SSPasBM} and Fubini's Theorem to get that it suffices to prove Theorem \ref{MainResult1} for the case when $A_t$ is Brownian motion.

Proceeding from this, let $\tau_D$ be the first time Brownian motion exits $D$; then similar reasoning as above yields:
\begin{align}\label{BMapproximation}
    \P_{x_0}(\tau_D>t)=\lim_{i\rightarrow\infty}\lim_{m\rightarrow\infty}\int_{D_i}\cdots\int_{D_i}\prod_{j=1}^mp(t_m,x_j,x_{j-1})dx.
\end{align}

From here, we will use the Brascamp-Lieb-Luttinger inequality in Theorem 8.8 in \cite{Baernstein} and Fubini's Theorem to establish Theorem \ref{MainResult1}. Since \cite{BLL} involves the notion of the symmetric decreasing rearrangement of a function, we first recall its definition.

\begin{definition} Let $f:\R^d\rightarrow\R$ be a measurable and nonnegative function; then the symmetric decreasing rearrangement $f\dual:\R^d\rightarrow\R$ is the unique function that satisfies:
\begin{align}
    f\dual(x)&=f\dual(y)\text{ if }|x|=|y|,\label{SDRProp1}\\
    f\dual(x)&\le f\dual(y)\text{ if }|x|\ge|y|,\label{SDRProp2}\\
    \lim_{|x|\rightarrow|y|^+}&f\dual(x)=f\dual(y),\label{SDRProp3}\\
    \Lcal^d\{x:f(x)>t\}&=\Lcal^d\{x:f\dual(x)>t\},\text{ }t>0.\label{SDRProp4}
\end{align}
\end{definition}

This definition will be applied to the function $p(t,x,y)$ above; however, since $p$ is already a nonincreasing radially symmetric function about its maximum, we get that $p(t,x,x_0)$ has the symmetric decreasing rearrangement $p(t,x,0)$ for $t>0$ and $x,x_0\in D$.

Let us now return to proving Theorem \ref{MainResult1}. The idea here is to take the iterated integral expression in (\ref{BMapproximation}) and establish the following:
\begin{align}
    &\int_{D_i}\cdots\int_{D_i}\prod_{j=1}^m p(t_m,x_j,x_{j-1})dx\nonumber\\
    &\le\int_{D_i^\#}\cdots\int_{D_i^\#}p(t_m,x_1,x_{0}^\#)\prod_{j=2}^m p(t_m,x_j,x_{j-1})dx,\label{FDDIneq}
\end{align}
where the Steiner symmetrization is performed with respect to a $(d-k)$-dimensional hyperplane $\Lambda$, $1\le k\le d-1$, $x_0^\#$ is the orthogonal projection of $x_0$ defined in Section 1. From this, one can obtain:
\begin{align*}\label{BMapproximation}
    \P_{x_0}(\tau_D>t)&=\lim_{i\rightarrow\infty}\lim_{m\rightarrow\infty}\int_{D_i}\cdots\int_{D_i}\prod_{j=1}^mp(t_m,x_j,x_{j-1})dx\\
    &\le\lim_{i\rightarrow\infty}\lim_{m\rightarrow\infty}\int_{D_i^\#}\cdots\int_{D_i^\#}p(t_m,x_0^\#,x_{1})\prod_{j=2}^mp(t_m,x_j,x_{j-1})dx\\
    &=\P_{x_0^\#}(\tau_{D^\#}>t)
\end{align*}

Note that the last inequality follows since $D_i\nearrow D$ and $\overline{D_i}\subset D_{i+1}$ implies $D_i^\#\nearrow D^\#$ and $\overline{D_i^\#}\subset D_{i+1}^\#$. To see why, note that since $D_i\nearrow D$, each $k$-dimensional slice of $D$ perpendicular to the hyperplane $\Lambda$ (denote by $D_k$) satisfies $(D_k)_i\nearrow D_k$ so that their symmetric decreasing rearrangements also satisfy this property; that is, $(D_k)_i\dual\nearrow D_k\dual$. Hence by Definition \ref{SSDefn}, $D_i^\#\nearrow D^\#$. Similar reasoning also gives that $\overline{D_i}\subset D_{i+1}$ implies $\overline{D_i^\#}\subset D_{i+1}^\#$.

From here, the proof will be broken up into four cases based on the choice of the $(d-k)$-dimensional hyperplane up to rotations and translations of
\begin{equation}\label{hyperplane}
    \Lambda=\{(x^1,\dots,x^k,x^{k+1},\dots,x^d)\in\R^d:x^1=\cdots=x^k=0\}.
\end{equation}
Without loss of generality, assume the symmetrization is done with respect to the first $k$ coordinates in what follows; otherwise, permute the coordinates so that the symmetrization is done with respect to $x^1,\dots,x^k$.

In the first case, let $\Lambda$ be as in \eqref{hyperplane} so that the origin $0\in\Lambda$ and the $x^1$-$,\cdots,x^k$-axes are orthogonal to $\Lambda$. Therefore if $x_0=(y_0,z_0)$ with $y_0\in\R^k$, $z_0\in\R^{d-k}$, then $x_0^\#=(0,z_0)$ since the last $d-k$ coordinates are unaffected in the symmetric decreasing rearrangement of each $k$-dimensional slice of $D$ orthogonal to $\Lambda$ as constructed above. In what follows, we will use the shorthand notation $D^k=D(z_k)$ and $(D^k)\dual=D\dual(z_k)$ for $1\le k\le d-1$. We will also use the notation $$p^{(z_1,z_2)}(t,y_1,y_2)\coloneqq p(t,(y_1,z_1),(y_2,z_2))$$ to denote the slice of $p$ in fixing $z_1,z_2\in\R^{d-k}$ and letting $y_1,y_2\in\R^k$ vary.

With the established setup, we obtain the following using Theorem 8.8 in \cite{Baernstein} and Fubini's Theorem:
\begin{align*}
    &\int_{D_i}\cdots\int_{D_i}p\left(t_m,x_1,x_0\right)\prod_{j=2}^m p\left(t_m,x_j,x_{j-1}\right)dx\\
    &=\int_{\R^{m(d-k)}}\left(\int_{D_i^1}\cdots\int_{D_i^m} p^{(z_1,z_0)}\left(t_m,y_1,y_0\right)\prod_{j=2}^mp^{(z_j,z_{j-1})}\left(t_m,y_j,y_{j-1}\right)dy\right)dz\\
    &\le\int_{\R^{m(d-k)}}\left(\int_{(D_i^1)\dual}\cdots\int_{(D_i^m)\dual} p^{(z_1,z_0)}\left(t_m,y_1,0\right)\right.\\&\hspace{6cm}\left.\times\prod_{j=2}^mp^{(z_j,z_{j-1})}\left(t_m,y_j,y_{j-1}\right)dy\right)dz\\
    &=\int_{D_i^\#}\cdots\int_{D_i^\#}p(t_m,x_1,x_0^\#)\prod_{j=2}^m p(t_m,x_j,x_{j-1})dx,
\end{align*}
establishing (\ref{FDDIneq}) in this case.

In the next case, consider a hyperplane of the form $$\Lambda=\{(x^1,\dots,x^d)\in\R^d:x^j=\omega^j, j=1,\dots,k\}$$ with at least one $\omega^i\neq0$ so that $\Lambda$ is orthogonal to the $x^1$-$,\dots,x^k$-axes, but the origin $0\notin \Lambda$. From this, denote \begin{align*}
    \omega&\coloneqq(\omega^1,\dots,\omega^k),\\
    w=y-\omega&=(y^1-\omega^1,\dots,y^k-\omega^k),
\end{align*}
with $w^j=y^j-\omega^j$ for $j\le k$ and, for any measurable set $E\subset\R^k$:
$$E-\omega=\{y-\omega:y\in E\}.$$
Using this definition, we obtain:
$$\Lambda-\omega=\{(x^1,\dots,x^d)\in\R^d:x^1=\cdots=x^k=0\}.$$
In addition, if we decompose $D$ into slices, then we can define the following domains in $\R^d$:
$$D-\omega\coloneqq D-(\omega,0)=\bigcup_{z\in\R^{d-k}}((D(z)-\omega)\times\{z\}),$$
and:
$$(D-\omega)^\#\coloneqq (D-(\omega,0))^\#=\bigcup_{z\in\R^{d-k}}((D(z)-\omega)\dual\times\{z\}).$$
The above notation may now be applied to the iterated integral in (\ref{FDDIneq}) to get:
\begin{align*}
    &\int_{D_i}\cdots\int_{D_i}p\left(t_m,x_1,x_0\right)\prod_{j=2}^mp\left(t_m,x_j,x_{j-1}\right)dx\\
    &=\int_{\R^{m(d-k)}}\left(\int_{D_i^1}\cdots\int_{D_i^m}p^{(z_1,z_0)}\left(t_m,y_1,y_0\right)\prod_{j=2}^mp^{(z_j,z_{j-1})}\left(t_m,y_j,y_{j-1}\right)dy\right)dz\\
    &=\int_{\R^{m(d-k)}}\left(\int_{D_i^1-\omega}\cdots\int_{D_i^m-\omega}p^{(z_1,z_0)}\left(t_m,w_1,y_0-\omega\right)\right.\\&\hspace{6cm}\left.\times\prod_{j=2}^mp^{(z_j,z_{j-1})}\left(t_m,w_j,w_{j-1}\right)dw\right)dz\\
    &\le\int_{\R^{m(d-k)}}\left(\int_{(D_i^1-\omega)\dual}\cdots\int_{(D_i^m-\omega)\dual}p^{(z_1,z_0)}\left(t_m,w_1,0\right)\right.\\&\hspace{6cm}\left.\times\prod_{j=2}^mp^{(z_j,z_{j-1})}\left(t_m,w_j,w_{j-1}\right)dw\right)dz\\
    &=\int_{\R^{m(d-k)}}\left(\int_{(D_i^1-\omega)\dual+\omega}\cdots\int_{(D_i^m-\omega)\dual+\omega}p^{(z_1,z_0)}\left(t_m,y_1,\omega\right)\right.\\&\hspace{6cm}\left.\times\prod_{j=2}^mp^{(z_j,z_{j-1})}\left(t_m,y_j,y_{j-1}\right)dy\right)dz\\
    &=\int_{(D_i-\omega)^\#+\omega}\cdots\int_{(D_i-\omega)^\#+\omega}p\left(t_m,x_1,(\omega,z_0)\right)\prod_{j=2}^mp\left(t_m,x_j,x_{j-1}\right)dx
\end{align*}
so that $x_0^\#=(\omega,z_0)$ in this case.

For the third case, consider the rotation operator $\rho\neq I$ (where $I$ is the identity operator) such that $$\rho \Lambda=\{\rho x:x\in \Lambda\}=\{(x^1,\dots,x^d)\in\R^d:x^1=\cdots=x^k=0\}$$ and $\rho0=0$ so that $0\in \Lambda$, but $\Lambda$ is not orthogonal to at least one of the $x^1,\dots,x^k$-axes (such an operator exists by the invertibility of $\rho$). If we denote $\xi_m=\rho x_m$, then the fact that $\rho$ is a linear transformation gives that each $\rho(x_j-x_{j-1})=\xi_j-\xi_{j-1}$ for each $j$. Therefore, if we let $\rho D=\{\rho x:x\in D\}$ and $\rho x_0=(y_0',z_0')$ for $y_0'\in\R^k$, $z_0'\in\R^{d-k}$, we may apply the first case with respect to $\rho \Lambda$ to get:
\begin{align*}
    &\int_{D_i}\cdots\int_{D_i}p\left(t_m,x_1,x_0\right)\prod_{j=2}^mp\left(t_m,x_j,x_{j-1}\right)dx\\
    &=\int_{\rho D_i}\cdots\int_{\rho D_i}p\left(t_m,\xi_1,(y_0',z_0')\right)\prod_{j=2}^mp\left(t_m,\xi_j,\xi_{j-1}\right)d\xi\\
    &\le\int_{(\rho D_i)^\#}\cdots\int_{(\rho D_i)^\#}p\left(t_m,\xi_1,(0,z_0')\right)\prod_{j=2}^mp\left(t_m,\xi_j,\xi_{j-1}\right)d\xi\\
    &=\int_{\rho\inv((\rho D_i)^\#)}\cdots\int_{\rho\inv((\rho D_i)^\#)}p(t_m,x_1,\rho\inv(0,z_0'))\prod_{j=2}^mp(t_m,x_j,x_{j-1})dx,
\end{align*}
so that $x_0^\#=\rho\inv(0,z_0')$.

In this last case, let the hyperplane $\Lambda$ be neither orthogonal to at least one of the $x^1$-$\dots,x^k$-axes nor have the origin. Since there exists $(\omega^1,\dots,\omega^d)\in \Lambda$ such that $\omega^i=0$ for $i>k$, if we translate $\Lambda$ by the $k$-dimensional point $(\omega^1,\dots,\omega^k)$, then:
$$0\in \Lambda'\coloneqq \Lambda-(\omega^1,\dots,\omega^k,0,\dots,0)$$ 
so that we may apply the appropriate rotation operator $\rho$ to get that
$$\rho \Lambda'=\{(\gamma^1,\dots,\gamma^d)\in\R^d:\gamma^1=\cdots=\gamma^k=0\}.$$ 
Thus, if we let:
\begin{itemize}
    \item $\omega=(\omega^1,\dots,\omega^k)\in\R^k$,
    \item $x_0=(y_0,z_0)$, $\rho x_0=(y_0',z_0')$ for $y_0,y_0'\in\R^k$, $z_0,z_0'\in\R^{d-k}$ as in the third case and $\rho'\coloneqq\rho\inv(0,z_0')$,
    \item $D-\omega$, $D+\omega$ be as in the second case for $D\subset\R^d$,
    \item $x_j=(y_j,z_j)$, $x_j'=(w_j,z_j)$, where $w_j=y_j-\omega$, $y_j\in\R^k$,
\end{itemize}
we get by the third case the following:
\begin{align*}
    &\int_{D_i}\cdots\int_{D_i}p(t_m,x_1,x_0)\prod_{j=2}^mp(t_m,x_j,x_{j-1})dx\\
    &=\int_{D_i-\omega}\cdots\int_{D_i-\omega}p(t_m,x_1',(y_0-\omega,z_0))\prod_{j=2}^mp(t_m,x_j',x_{j-1}')dx'\\
    &\le{\small\int_{\rho\inv((\rho(D_i-\omega))^\#)}\cdots\int_{\rho\inv((\rho(D_i-\omega))^\#)}p(t_m,x_1',\rho')\prod_{j=2}^mp(t_m,x_j',x_{j-1}')dx'}\\
    &=\int_{\rho\inv((\rho(D_i-\omega))^\#)+\omega}\cdots\int_{\rho\inv((\rho(D_i-\omega))^\#)+\omega}p(t_m,x_1,x_0^\#)\prod_{j=2}^mp(t_m,x_j,x_{j-1})dx
\end{align*}
where $x_0^\#=\rho\inv(0,z_0')+(\omega,0)$ in this most general case. This concludes the proof of Theorem \ref{MainResult1}.

\section{Proof of Theorem \ref{MainResult2}}\label{Chap3}
To establish Theorem \ref{MainResult2}, we will first prove the following lemma which is a generalization of Lemma 5.2 from \cite{BMW}. For what follows, $\beta_D,\beta_G>0$ depend on the Lipschitz character of $D$ and $G$, respectively; more details will be given on these constants in Corollary \ref{LpConvergence}.

\begin{lemma}\label{BMWGeneral} Let $D,G\subset\R^d$ be Lipschitz domains and let $C_{p,D}$ be a constant dependent on $p>0$ and $D$, and define $C_{p,G}$ similarly.
\begin{enumerate}
    \item If $p\ge1$, then:
    \begin{align*}
        &\sup_{x\in G\cup D}\E_x[|\tau_D-\tau_G|^p]\\
        &\le\max\left\{C_{p,D}\sup_{x\in D\setminus{G}}(d(x,\partial D))^{\beta_D},C_{p,G}\sup_{x\in G\setminus{D}}(d(x,\partial G))^{\beta_G}\right\}.
    \end{align*}
    \item If $p\in(0,1)$, then:
    \begin{align*}
        &\sup_{x\in G\cup D}\E_x[|\tau_D-\tau_G|^p]\\
        &\le\max\left\{C_{1,D}^p\sup_{x\in D\setminus{G}}(d(x,\partial D))^{p\beta_D},C_{1,G}^p\sup_{x\in G\setminus{D}}(d(x,\partial G))^{p\beta_G}\right\}.
    \end{align*}
\end{enumerate}
\end{lemma}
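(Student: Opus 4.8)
The plan is to reduce the whole statement to a single pointwise moment bound for one Lipschitz domain and then glue two copies of it together using the strong Markov property. The bound I want is: for a bounded Lipschitz domain $D$ and $p\ge1$ there are constants $C_{p,D}>0$ and $\beta_D\in(0,1]$, with $\beta_D$ depending only on the Lipschitz character of $D$, such that $\E_x[\tau_D^p]\le C_{p,D}\,(d(x,\partial D))^{\beta_D}$ for all $x\in D$. For $p=1$ this is essentially the content of Lemma 5.2 of \cite{BMW}, which uses that a Lipschitz domain satisfies an exterior cone condition at every boundary point; if one prefers, it can be reproved by comparing, at the nearest boundary point to $x$, the Brownian motion against the exit time from the complement of that exterior cone and iterating over dyadic scales. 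To pass to $p>1$, write $\E_x[\tau_D^p]=p\int_0^\infty t^{p-1}\P_x(\tau_D>t)\,dt$ and put $a=C_{1,D}(d(x,\partial D))^{\beta_D}$. For $t\le1$ use $\P_x(\tau_D>t)\le\min\{1,a/t\}$ (Markov's inequality and the $p=1$ bound); splitting $\int_0^1$ at $t=a$ gives a contribution $\lesssim a$ when $p>1$, using that $d(x,\partial D)$ is bounded. For $t\ge1$, boundedness of $D$ gives a uniform exponential tail $\sup_{y\in D}\P_y(\tau_D>s)\le Me^{-\lambda s}$, so the Markov property at time $1$ yields $\P_x(\tau_D>t)\le\P_x(\tau_D>1)\,Me^{-\lambda(t-1)}\le a\,Me^{-\lambda(t-1)}$ and hence $\int_1^\infty t^{p-1}\P_x(\tau_D>t)\,dt\lesssim a$. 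Adding the two pieces gives $\E_x[\tau_D^p]\le C_{p,D}(d(x,\partial D))^{\beta_D}$ with the same exponent $\beta_D$.

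Granting this, part (1) is proved by cases on the position of $x\in D\cup G$ relative to $D\cap G$. If $x\in D\setminus G$, then $x\notin G$, so $\tau_G=0$ (by path continuity if $x\notin\overline G$, and by regularity of the Lipschitz boundary of $G$ if $x\in\partial G$); thus $|\tau_D-\tau_G|^p=\tau_D^p$ and $\E_x[\tau_D^p]\le C_{p,D}(d(x,\partial D))^{\beta_D}\le C_{p,D}\sup_{x\in D\setminus G}(d(x,\partial D))^{\beta_D}$, and the case $x\in G\setminus D$ is symmetric. If $x\in D\cap G$, apply the strong Markov property at $\tau_{D\cap G}=\tau_D\wedge\tau_G$. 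On the event $\{\tau_D<\tau_G\}$ one has $\tau_{D\cap G}=\tau_D$ and, since $B_t\in G$ for every $t<\tau_G$, the exit point $B_{\tau_D}$ lies in $G\cap\partial D\subseteq G\setminus D$; because $|\tau_D-\tau_G|=\tau_G\circ\theta_{\tau_D}$ there, the strong Markov property gives $\E_x\!\left[|\tau_D-\tau_G|^p;\,\tau_D<\tau_G\right]=\E_x\!\left[\E_{B_{\tau_D}}[\tau_G^p];\,\tau_D<\tau_G\right]\le\P_x(\tau_D<\tau_G)\,C_{p,G}\sup_{y\in G\setminus D}(d(y,\partial G))^{\beta_G}$. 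Symmetrically $\E_x\!\left[|\tau_D-\tau_G|^p;\,\tau_G<\tau_D\right]\le\P_x(\tau_G<\tau_D)\,C_{p,D}\sup_{y\in D\setminus G}(d(y,\partial D))^{\beta_D}$, and $\{\tau_D=\tau_G\}$ contributes $0$. Summing these and bounding $\P_x(\tau_D<\tau_G)+\P_x(\tau_G<\tau_D)\le1$ against the larger of the two suprema gives exactly the bound in part (1); it is the probabilistic weights here that convert the a priori sum of two estimates into a maximum.

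Part (2) is immediate from part (1) with $p=1$ and Jensen's inequality applied to the concave map $t\mapsto t^p$, $p\in(0,1)$: $\E_x[|\tau_D-\tau_G|^p]\le(\E_x[|\tau_D-\tau_G|])^p\le\max\{C_{1,D}\sup_{x\in D\setminus G}(d(x,\partial D))^{\beta_D},\,C_{1,G}\sup_{x\in G\setminus D}(d(x,\partial G))^{\beta_G}\}^p$, and distributing the $p$-th power produces the stated inequality with exponents $p\beta_D,\,p\beta_G$ and constants $C_{1,D}^p,\,C_{1,G}^p$. I expect the main obstacle to be the single-domain estimate $\E_x[\tau_D^p]\le C_{p,D}(d(x,\partial D))^{\beta_D}$: its $p=1$ core is the geometric input (drawn from \cite{BMW}), so the genuine work is the bootstrap to $p>1$ with $\beta_D$ unchanged, and the careful bookkeeping in the strong Markov step guaranteeing that the exit point $B_{\tau_{D\cap G}}$ lands in $G\setminus D$ (respectively $D\setminus G$), so that the right-hand suprema range only over $D\setminus G$ and $G\setminus D$ and not over all of $D$ and $G$.
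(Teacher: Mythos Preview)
Your argument is correct and follows essentially the same architecture as the paper's: a case split on the position of $x$ relative to $D\cap G$, reduction to a single-domain moment bound $\E_x[\tau_D^p]\le C_{p,D}(d(x,\partial D))^{\beta_D}$, and Jensen for $p\in(0,1)$. The only substantive difference is in packaging: where the paper invokes Lemma~6.4 of \cite{BMW} as a black box both for the single-domain bound and for the subdomain estimate $\E_x[|\tau_D-\tau_{D\cap G}|^p]\le C_{p,D}\sup_{y\in D\setminus(D\cap G)}(d(y,\partial D))^{\beta_D}$, you unpack both ingredients explicitly---giving a tail-integral bootstrap from $p=1$ to $p>1$ and writing out the strong Markov step at $\tau_{D\cap G}$, including the observation that $B_{\tau_{D\cap G}}\in(D\setminus G)\cup(G\setminus D)$ and the use of the probabilistic weights $\P_x(\tau_D<\tau_G)+\P_x(\tau_G<\tau_D)\le1$ to turn a sum into a maximum. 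Your presentation is therefore more self-contained and arguably cleaner than the paper's somewhat informal ``first let $\tau_D>\tau_G$ a.s.'' phrasing, but the underlying ideas are the same.
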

\begin{proof} We will break this up into cases:\\
First, let $x\notin D\cup G$; then $\tau_D=\tau_G=0$ a.s., and so the inequality is trivial. From here, assume that $\tau_D\neq\tau_G$ a.s.\\
Next, let $x\in D\setminus{G}$; then $\tau_G=0$ a.s., so by the proof of Lemma 6.4 in \cite{BMW} and the fact that $D\setminus{G}=D\setminus{(D\cap G)}$, we obtain the following for $p\ge1$:
$$\sup_{x\in D\setminus{G}}\E_x[\tau_D^p]\le C_{p,D}\sup_{x\in D\setminus{G}}\E_x[\tau_D]\le C_{p,D}\sup_{x\in D\setminus{G}}(d(x,\partial D))^{\beta_D}.$$
If $p\in(0,1)$, then an application of the proof of Lemma 6.4 from \cite{BMW} along with Jensen's inequality yields:
$$\sup_{x\in D\setminus{G}}\E_x[\tau_D^p]\le\sup_{x\in D\setminus{G}}(\E_x[\tau_D])^p\le C_{1,D}^p\sup_{x\in D\setminus{G}}(d(x,\partial D))^{p\beta_D},$$
The case when $x\in G\setminus{D}$ uses the same argument as the second case above with $D,G$ interchanged.\\
Lastly, let $x\in{D\cap G}$. First let $\tau_D>\tau_G$ a.s.; that is, the Brownian process starting at $x$ first exits $G$. Then $\tau_G=\tau_{D\cap G}$ a.s., and so we may apply the result of Lemma 6.4 in \cite{BMW} to get the following when $p\ge1$:
\begin{align*}
    \sup_{x\in {D\cap G}}\E_x[|\tau_D-\tau_G|^p]&=\sup_{x\in {D\cap G}}\E_x[|\tau_D-\tau_{D\cap G}|^p]\\&\le C_{p,D}\sup_{x\in {D\setminus (D\cap G)}}(d(x,\partial D))^{\beta_D}\\
    &=C_{p,D}\sup_{x\in D\setminus {G}}(d(x,\partial D))^{\beta_D}.
\end{align*}
For $p\in(0,1)$, another application of Jensen's inequality along with the above work yields:
$$\sup_{x\in{D\cap G}}\E_x[|\tau_D-\tau_G|^p]\le\sup_{x\in {D\cap G}}(\E_x[|\tau_D-\tau_{G}|])^p\le C_{1,D}^p\sup_{x\in D\setminus {G}}(d(x,\partial D))^{p\beta_D}.$$
The case when $\tau_D<\tau_G$ a.s. is identical to the one above in interchanging $D,G$.
\end{proof}

\begin{remark}\label{generalizationRemark} It should be noted that \cite{Kulczycki} established intrinsic ultracontractivity for the semigroup of $A_t$ on any Lipschitz domain; this is required in the proof of Lemma 5.2 in \cite{BMW} when $p>1$ as well as for establishing \eqref{eigenvalue} in the case of $\alpha$-stable symmetric processes as per \cite{Davies}. (Note that bounded convex domains are Lipschitz; see \cite{DekelLev}.) Further, the inequality:
\begin{equation}\label{ExitTimeIneq}
    \E_x[\tau_D]\le C d(x,\partial D)^{\beta}
\end{equation}
used in Lemma 5.2 from \cite{BMW} (and hence Lemma \ref{BMWGeneral} above) holds for $\beta=\alpha/2$ and $C>0$ a constant dependent on $D$ if $\tau_D$ the first exit time of $A_t$ from a $C^{1,1}$ domain $D$ (see \cite{Kulczycki2}). Here, the bounds on the Green function of $D$ from \cite{Kulczycki2} are used to get (\ref{ExitTimeIneq}). Using this, \cite{Siudeja} obtains the same bound for $D$ a convex domain where $C$ in this case depends on the radius $r$ of the outer ball condition and the diameter of $D$. Hence if $D\in\Ocal_{\varepsilon,B}$ is convex, then since the balls for the outer ball condition can be of any radius for convex domains, let $r=\varepsilon$, and in bounding the diameter of $D$ by that of $B$, we get that $C$ is bounded above by a constant $C_{\varepsilon,B}$ dependent only on $\varepsilon$ and $B$. Although it is yet to be established if the same inequality holds for every Lipschitz domain, the above reasoning states that Lemma \ref{BMWGeneral} easily extends to an $\alpha$-stable symmetric process $A_t$ when the domains $D$ and $G$ are $C^{1,1}$ or convex; however, for what follows, we restrict our interest to only convex domains.
\end{remark}

An immediate corollary to Lemma \ref{BMWGeneral} is the following which holds for any sequence of domains $\{D_m\}\subset\Ocal_{\varepsilon,B}$ converging to some $D'$ with respect to the Hausdorff metric:
\begin{corollary}\label{LpConvergence} Given $\varepsilon>0$, if $\{D_m\}_{m=0}^\infty\subset\Ocal_{\varepsilon,B}$ converges to $D'$ with respect to the Hausdorff metric, then for any $p\in(0,1]$:
$$\sup_{x\in D'}\E_x[|\tau_{D_m}-\tau_{D'}|^p]\rightarrow0.$$
\end{corollary}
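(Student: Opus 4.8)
The plan is to deduce this directly from Lemma \ref{BMWGeneral} by showing that each of the two quantities appearing inside the maximum tends to $0$ as $m\to\infty$. Fix $p\in(0,1]$; since all $D_m$ and $D'$ lie in $\Ocal_{\varepsilon,B}$, they are Lipschitz domains with a \emph{uniform} Lipschitz character (governed only by $\varepsilon$), so the constants $C_{p,D_m}$, $C_{p,D'}$ and the exponents $\beta_{D_m},\beta_{D'}$ can be taken to be a single constant $C_{\varepsilon,B}$ and a single exponent $\beta=\beta_\varepsilon>0$ independent of $m$ (this is exactly the uniformity discussed in Remark \ref{generalizationRemark}, where for convex domains $\beta=\alpha/2$ and $C$ depends only on $\varepsilon,B$; for Brownian motion the analogous uniform bound comes from the proof of Lemma 6.4 in \cite{BMW} applied with the common $\varepsilon$-cone character). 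Applying Lemma \ref{BMWGeneral} with $D=D_m$ and $G=D'$, it therefore suffices to show
\begin{equation*}
    \sup_{x\in D_m\setminus D'} d(x,\partial D_m)\longrightarrow 0
    \qquad\text{and}\qquad
    \sup_{x\in D'\setminus D_m} d(x,\partial D')\longrightarrow 0.
\end{equation*}

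For the first of these, take any $x\in D_m\setminus D'$. Then $x\notin D'$, so $d(x,\partial D')\le d(x,D')\le d_\Hcal(D_m,D')$ by the definition \eqref{HausdorffDefn} of the Hausdorff metric (the supremum over $D_m$ in that definition dominates $d(x,D')$). Now I would bound $d(x,\partial D_m)$ in terms of $d(x,\partial D')$ using the uniform $\varepsilon$-cone property: because $x\in\partial D'$'s $\varepsilon$-neighbourhood story, one shows that a point of $D_m$ close to $D'{}^c$ cannot be deep inside $D_m$. Concretely, if $d(x,\partial D_m)$ were large, say $\ge \rho$, then $B(x,\rho)\subset D_m$, while $x$ is within $d_\Hcal(D_m,D')$ of $\partial D'$, forcing $\partial D'$ to cut through a ball of $D_m$; the $\varepsilon$-cone property of $D'$ then bounds $\rho$ by a fixed multiple of $d_\Hcal(D_m,D')$ (for convex domains this is elementary, and in the Lipschitz case one uses that $\partial D'$ is a Lipschitz graph on scale $\varepsilon$ so that the two boundaries stay within a comparable distance). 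Hence $\sup_{x\in D_m\setminus D'} d(x,\partial D_m)\le c_\varepsilon\, d_\Hcal(D_m,D')\to 0$. The second supremum is handled the same way with the roles of $D_m$ and $D'$ reversed, using the uniform $\varepsilon$-cone property of the $D_m$'s (and $D'\in\Ocal_{\varepsilon,B}$, which is cited just after Theorem \ref{MainResult2}).

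Combining, Lemma \ref{BMWGeneral} gives, for $p\ge1$ (in particular for $p=1$),
\begin{equation*}
    \sup_{x\in D'}\E_x[|\tau_{D_m}-\tau_{D'}|]\le C_{\varepsilon,B}\,(c_\varepsilon\, d_\Hcal(D_m,D'))^{\beta}\longrightarrow 0,
\end{equation*}
and for $p\in(0,1)$ the part (2) bound of the lemma gives the same conclusion with exponent $p\beta$; alternatively one recovers $p\in(0,1)$ from the $p=1$ case by Jensen's inequality, since $\E_x[|\tau_{D_m}-\tau_{D'}|^p]\le(\E_x[|\tau_{D_m}-\tau_{D'}|])^p$. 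I expect the main obstacle to be the geometric comparison $d(x,\partial D_m)\le c_\varepsilon\, d_\Hcal(D_m,D')$ for $x\in D_m\setminus D'$: it is where the $\varepsilon$-cone (Lipschitz) hypothesis is genuinely used, ruling out thin spikes or collapsing necks that could keep a point far from $\partial D_m$ while still being outside $D'$, and it must be done with constants uniform in $m$. Everything else is bookkeeping with the definition of $d_\Hcal$ and the uniform constants supplied by $\Ocal_{\varepsilon,B}$ and Remark \ref{generalizationRemark}.
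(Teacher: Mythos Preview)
Your approach is essentially the same as the paper's: apply Lemma \ref{BMWGeneral} with $D=D_m$, $G=D'$, use the uniform constants furnished by membership in $\Ocal_{\varepsilon,B}$ (the paper invokes Proposition 2.3 of \cite{DeBlassie} for Brownian motion and Remark \ref{generalizationRemark} for the stable case), and then handle $p\in(0,1)$ by Jensen. The only difference is cosmetic: the paper dispatches the geometric step $\sup_{x\in D_m\setminus D'}d(x,\partial D_m)\to 0$ and its companion in a single sentence (``Since $d_\Hcal(D_m,D')\to 0$, each expression inside the brackets above goes to $0$''), whereas you correctly flag it as the place where the $\varepsilon$-cone hypothesis does real work and sketch why.
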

\begin{proof}
    Firstly, if $D\in\Ocal_{\varepsilon,B}$, then there exists a constant $C_{\varepsilon,B}$ dependent on $\varepsilon$ and $B$ such that the constant $C$ as in (\ref{ExitTimeIneq}) satisfies $C\le C_{\varepsilon,B}$. This inequality already holds if $\tau_D$ is the first exit time of $A_t$ from a convex domain $D$ as per Remark \ref{generalizationRemark}, so consider when $\tau_D$ is the first exit time of Brownian motion from a Lipschitz domain $D$. Then Proposition 2.3 in \cite{DeBlassie} gives that not only $\beta\le 2$, but the only dependence of $\beta$ on $D$ is on the angle $\theta$ of a uniform cone condition (as mentioned in \cite{BMW}, this proposition extends to $d\ge2$). Since $D\in\Ocal_{\varepsilon,B}$, however, let $\theta=\varepsilon$ so that $\beta$ is only dependent on $\varepsilon$. The constant $C$, on the other hand, depends only on $\theta=\varepsilon$ and the radius of the ball $B$; because of this, we have $C=C_{\varepsilon,B}$.


    Now, if $p=1$ as in Lemma \ref{BMWGeneral}, then since each of $D,G\in\Ocal_{\varepsilon,B}$, the constants $C_{1,D}$ and $C_{1,G}$ can be bounded above by a constant $C_{\varepsilon,B}$ as per the above argument. This holds when $A_t$ is either Brownian motion or an $\alpha$-stable symmetric process, $\alpha\in(0,2)$, the latter provided each of $D$, $G$ are convex. Hence for $D_m,D'\in\Ocal_{\varepsilon,B}$ (with each domain convex if $A_t$ is an $\alpha$-stable symmetric process, $\alpha\in(0,2)$), Lemma \ref{BMWGeneral} gives:
    \begin{align*}
        &\sup_{x\in D'}\E_x[|\tau_{D_m}-\tau_{D'}|]\\&\le C_{\varepsilon,B}\max\left\{\sup_{x\in D'\setminus{D_m}}(d(x,\partial D'))^\beta,\sup_{x\in D_m\setminus{D'}}(d(x,\partial D_m))^\beta\right\}.
    \end{align*}
    Since $d_\Hcal(D_m,D')\rightarrow0$, each expression inside the brackets above goes to 0, yielding the desired result.

    If $p\in(0,1)$, an application of Jensen's inequality yields:
    $$\sup_{x\in D'}\E_x[|\tau_{D_m}-\tau_{D'}|^p]\le\sup_{x\in D'}\left(\E_x[|\tau_{D_m}-\tau_{D'}|]\right)^p.$$
    Since the expression on the right hand side of the inequality converges to 0 as $m\rightarrow\infty$ as per the above argument, the result follows for $p\in(0,1)$.
\end{proof}

We now turn to the proof of Theorem \ref{MainResult2}.

\begin{proof} To see that
\begin{equation}\label{DistributionConv}
|\P_{x_m}(\tau_{D_m}>t)-\P_{x'}(\tau_{D'}>t)|\rightarrow0
\end{equation}
holds, we will break up the difference above as follows:
\begin{align*}
    &|\P_{x_m}(\tau_{D_m}>t)-\P_{x'}(\tau_{D'}>t)|\\
    &\le|\P_{x_m}(\tau_{D_m}>t)-\P_{x_m}(\tau_{D'}>t)|+|\P_{x_m}(\tau_{D'}>t)-\P_{x'}(\tau_{D'}>t)|.
\end{align*}

To show that the first expression goes to 0, note that since $x_m\rightarrow x'\in D'$, we have that $x_m\in D'$ for $m$ large enough, and so:
\begin{align*}
    |\P_{x_m}(\tau_{D_m}>t)-\P_{x_m}(\tau_{D'}>t)|\le\sup_{y\in D'}|\P_{y}(\tau_{D_m}>t)-\P_{y}(\tau_{D'}>t)|.
\end{align*}
To see that:
\begin{equation}\label{convDist1}
    \sup_{y\in D'}|\P_{y}(\tau_{D_m}>t)-\P_{y}(\tau_{D'}>t)|\rightarrow0
\end{equation}
as $m\rightarrow\infty$, we first use the joint continuity of $p_{D'}$ in $(0,\infty)\times D'\times D'$; that is, given $\varepsilon>0$, let $\delta>0$ be such that $|(t',x',y')-(t,x,y)|<\delta$ implies $|p_{D'}(t',x',y')-p_{D'}(t,x,y)|<\varepsilon/\Lcal(D')$. Then:
\begin{align}
    &\sup_{y\in D'}|\P_{y}(\tau_{D'}>t+\delta/4)-\P_{y}(\tau_{D'}>t)|\\&=\sup_{y\in D'}\left|\int_{D'}p_{D'}(t+\delta/4,z,y)dz-\int_{D'}p_{D'}(t,z,y)dz\right|\nonumber\\
    &\le\sup_{y\in D'}\int_{D'}\left|p_{D'}(t+\delta/4,z,y)-p_{D'}(t,z,y)\right|dz<\varepsilon.\label{convDist2}
\end{align}
This helps to establish that:
$$\lim_{\delta\rightarrow0^+}\sup_{y\in D'}\P_y(\tau_{D'}\le t+\delta/4)\sup_{y\in D'}\P_y(\tau_{D'}\le t)$$
and similar reasoning yields:
$$\lim_{\delta\rightarrow0^+}\sup_{y\in D'}\P_y(\tau_{D'}\le t-\delta/4)=\sup_{y\in D'}\P_y(\tau_{D'}\le t).$$
From here, we will make use of the inequalities:
\begin{equation}\label{bound1}
    \P_y(\tau_{D'}\le t-\delta/4)\le\P_y(\tau_{D_m}\le t)+\P_y(|\tau_{D_m}-\tau_{D'}|>\delta/4)
\end{equation}
and
\begin{equation}\label{bound2}
    \P_y(\tau_{D'}\le t)\le\P_y(\tau_{D_m}\le t+\delta/4)+\P_y(|\tau_{D_m}-\tau_{D'}|>\delta/4).
\end{equation}
Here, rearrange \eqref{bound1} as the following:
\begin{equation*}
    -\P_y(\tau_{D_m}\le t)\le-\P_y(\tau_{D'}\le t-\delta/4)+\P_y(|\tau_{D_m}-\tau_{D'}|>\delta/4)
\end{equation*}
then add $\P_y(\tau_{D'}\le t+\delta/4)$ to both sides so that we have:
\begin{align*}
    &\P_y(\tau_{D'}\le t+\delta/4)-\P_y(\tau_{D_m}\le t)\\
    &\le\P_y(\tau_{D'}\le t+\delta/4)-\P_y(\tau_{D'}\le t-\delta/4)+\P_y(|\tau_{D_m}-\tau_{D'}|>\delta/4)
\end{align*}
implying that:
\begin{align*}
    &\sup_{y\in D'}[\P_y(\tau_{D'}\le t+\delta/4)-\P_y(\tau_{D_m}\le t))]\\
    &\le\sup_{y\in D'}[\P_y(\tau_{D'}\le t+\delta/4)-\P_y(\tau_{D'}\le t-\delta/4)]+\sup_{y\in D'}\P_y(|\tau_{D_m}-\tau_{D'}|>\delta/4).
\end{align*}
From this, similar reasoning as in \eqref{convDist2} shows that:
$$\sup_{y\in D'}[\P_y(\tau_{D'}\le t+\delta/4)-\P_y(\tau_{D'}\le t-\delta/4)]<\varepsilon.$$
Next, since $\sup_{y\in D'}\P_y(|\tau_{D_m}-\tau_{D'}|>\delta/4)\rightarrow0$ as $m\rightarrow\infty$ by Corollary \ref{LpConvergence} and Chebyshev's Inequality, given $\varepsilon>0$, let $M\in\N$ hold such that $m\ge M$ implies that:
$$\sup_{y\in D'}\P_y(|\tau_{D_m}-\tau_{D'}|>\delta/4)<\varepsilon.$$
so that:
\begin{equation}\label{boundA}
    \sup_{y\in D'}[\P_y(\tau_{D'}\le t+\delta/4)-\P_y(\tau_{D_m}\le t))]<2\varepsilon
\end{equation}
In addition, \eqref{bound2} allows us to obtain, for $m\ge M$:
$$\sup_{y\in D'}\P_y(\tau_{D_m}\le t)-\P_y(\tau_{D'}\le t+\delta/4)<\varepsilon$$
and so this and \eqref{boundA} imply:
$$\sup_{y\in D'}|\P_y(\tau_{D_m}\le t)-\P_y(\tau_{D'}\le t+\delta/4)|<2\varepsilon.$$
Therefore we have the work below which establishes \eqref{convDist1} as $\varepsilon>0$ is arbitrary:
\begin{align*}
    &\sup_{y\in D'}|\P_{y}(\tau_{D_m}> t)-\P_{y}(\tau_{D'}> t)|\nonumber\\
    &\le\sup_{y\in D'}|\P_{y}(\tau_{D_m}> t)-\P_{y}(\tau_{D'}> t+\delta/4)|\\&\hspace{3cm}+\sup_{y\in D'}|\P_{y}(\tau_{D'}> t+\delta/4)-\P_{y}(\tau_{D'}> t)|\nonumber\\
    &=\sup_{y\in D'}|\P_{y}(\tau_{D_m}\le t)-\P_{y}(\tau_{D'}\le t+\delta/4)|+\\&\hspace{3cm}\sup_{y\in D'}|\P_{y}(\tau_{D'}>t+\delta/4)-\P_{y}(\tau_{D'}> t)|\nonumber\\
    &<3\varepsilon.\nonumber
\end{align*}

To see that $|\P_{x_m}(\tau_{D'}>t)-\P_{x'}(\tau_{D'}>t)|\rightarrow0$ as $m\rightarrow\infty$, consider the following:
\begin{align}
    |\P_{x_m}(\tau_{D'}>t)-\P_{x'}(\tau_{D'}>t)|&=\left|\int_{D'}p_{D'}(t,y,x_m)dy-\int_{D'}p_{D'}(t,y,x')dy\right|\nonumber\\
    &\le\int_{D'}\left|p_{D'}(t,y,x_m)-p_{D'}(t,y,x')\right|dy.\label{integralDifference}
\end{align}
From here, Corollary 4.8 in \cite{Bass} gives that \eqref{integralDifference} goes to 0 as $m\rightarrow\infty$, implying that:

$$\lim_{m\rightarrow\infty}|\P_{x_m}(\tau_{D'}>t)-\P_{x'}(\tau_{D'}>t)|=0$$
and hence
$$\lim_{m\rightarrow\infty}|\P_{x_m}(\tau_{D_m}>t)-\P_{x'}(\tau_{D'}>t)|=0,$$
as desired.
\end{proof}

Denoting $p_D^\alpha(t,x,y)$ as the transition density of the fractional Laplacian $-(-\frac{1}{2}\Delta)^{\alpha/2}$ in $D$, then since $p^\alpha(t,x,y)$ and $p_D^\alpha(t,x,y)$ share many properties in common with $p(t,x,y)$ and $p_D(t,x,y)$, respectively (see Theorems 2.1 and 2.4 in \cite{ChenSong}), then the same reasoning as above along with Remark \ref{generalizationRemark} shows that not only does Corollary 4.8 in \cite{Bass} hold for $p_D^\alpha$, but Theorem \ref{MainResult2} holds for $\alpha$-stable symmetric processes provided each $D_m$ is convex. This will be needed in proving Corollary \ref{applications} for $A_t$ any $\alpha$-stable symmetric process, $\alpha\in(0,2]$; for further details, see Section \ref{Appschapter}.

\begin{remark}\label{SSLipschitz}
It should be noted that Steiner symmetrization does not necessarily preserve the Lipschitz boundary. As a counterexample, consider $f(x)$ defined below:
\[   f(x)=\left\{
\begin{array}{ll}
      3-2\sqrt{1-(x-1)^2} & x\in(-2,0) \\
      3 & \text{else} \\
\end{array} 
\right. \]
and consider the open set:
$$D=\{(x,y)\in\R^2:-3<x<3,-f(-x)<y<f(x)\}.$$
Then $D$ satisfies the $\varepsilon$-cone property where $\varepsilon=1/4$; however, in performing a Steiner symmetrization with respect to the $x$-axis, then the resulting region:
$$D^\#=\{(x,y)\in\R^2:-3<x<3,-0.5[f(x)+f(-x)]<y<0.5[f(x)+f(-x)]\}$$
cannot satisfy the $\varepsilon$-cone property for any $\varepsilon>0$ due to the cusps at $(0,3)$ and $(0,-3)$. Because of this, $D^\#$ cannot be a Lipschitz domain.
\end{remark}

Now in applying Theorem \ref{MainResult2} to Corollary \ref{MainResult2-SS}, given $D_0=D$ and $x_0=x\in D$ as in Section \ref{Introduction}, let $D_m$ be the $m$-th consecutive symmetrization of $D$ with respect to the sequence of hyperplanes $\{\Lambda_m\}_{m=1}^\infty$ that converges to $D'$ with respect to the Hausdorff metric; then by Remark \ref{SSLipschitz} we must assume that each $D_m$ is in the class of domains $\Ocal_{\varepsilon,B}$ for some $\varepsilon>0$ and ball $B$. To prove Corollary \ref{MainResult2-SS}, we will show that the corresponding sequence of orthogonal projections $x_m\in D_m$ converges to some $x'\in D'$ as shown below.

\begin{lemma} Consider the sequences $\{x_m\}_{m=0}^\infty$, $\{D_m\}_{m=0}^\infty$, and $\{\Lambda_m\}_{m=1}^\infty$ described above. If $D_m$ converges to some $D'$ with respect to the Hausdorff metric, then there exists $x'$ such that $x_m\rightarrow x'$.
\end{lemma}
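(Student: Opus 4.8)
The plan is to combine a compactness argument with the monotonicity supplied by Theorem~\ref{MainResult1} and the continuity statement of Theorem~\ref{MainResult2}, and then to upgrade convergence along subsequences to convergence of the whole sequence using the geometry of the iterated orthogonal projections. In broad strokes: boundedness gives a convergent subsequence; Theorem~\ref{MainResult1} makes the survival probabilities $\P_{x_m}(\tau_{D_m}>t)$ monotone, hence convergent, so by Theorem~\ref{MainResult2} every subsequential limit $x'$ of $\{x_m\}$ satisfies $\P_{x'}(\tau_{D'}>t)=L(t)$ for a single function $L$; finally one shows such an $x'$ is forced to be unique.

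In detail, I would first note that $x_m\in D_m\subset B$ for all $m$, so $\{x_m\}_{m=0}^\infty$ is bounded and has a convergent subsequence, and any subsequential limit $x'$ lies in $\overline{D'}$ because $x_m\in D_m$ and $d_\Hcal(D_m,D')\to0$. Fix $t>0$ and set $a_m:=\P_{x_m}(\tau_{D_m}>t)$. Applying Theorem~\ref{MainResult1} at the $m$-th step, with $D=D_{m-1}$, $D^\#=D_m$ and $x_0=x_{m-1}$ (so $x_0^\#=x_m$), gives $a_{m-1}\le a_m$; since $0\le a_m\le1$ the full sequence converges, say $a_m\to L(t)$. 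Now let $x_{m_j}\to x'$ be any convergent subsequence. If $x'\in D'$, then $\{D_{m_j}\}\subset\Ocal_{\varepsilon,B}$ converges to $D'$ with $x_{m_j}\to x'\in D'$, so Theorem~\ref{MainResult2} gives $a_{m_j}\to\P_{x'}(\tau_{D'}>t)$, whence $\P_{x'}(\tau_{D'}>t)=L(t)$. If instead $x'\in\partial D'$, then $a_{m_j}\le t^{-1}\E_{x_{m_j}}[\tau_{D_{m_j}}]\le t^{-1}C_{\varepsilon,B}\,d(x_{m_j},\partial D_{m_j})^{\beta}\to0$ by \eqref{ExitTimeIneq} (with the uniform constant of Remark~\ref{generalizationRemark} and Corollary~\ref{LpConvergence}) together with $\partial D_{m_j}\to\partial D'$, so $L(t)=0$; since $\P_{y}(\tau_{D'}>t)>0$ for $y\in D'$, this forces every subsequential limit to lie on $\partial D'$ as well. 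Either way, $\P_{x'}(\tau_{D'}>t)=L(t)$ for every subsequential limit $x'$ and every $t>0$.

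It remains to see that this common distribution is attained at a single point, and this is the step I expect to be the main obstacle. Here I would exploit that $x_m$ is, by construction, the orthogonal projection of $x_{m-1}$ onto the affine subspace $\Lambda_m$, which, being a hyperplane of symmetry of $D_m$, contains the centroid $c_m$ of $D_m$; moreover $c_m\to c'$, the centroid of $D'$. The Pythagorean identity $|x_{m-1}-q|^2=|x_m-q|^2+|x_m-x_{m-1}|^2$, valid for every $q\in\Lambda_m$, shows that each step is non-expansive toward any point of $\Lambda_m$. When $D'$ has only finitely many hyperplanes of symmetry, all passing through $c'$ — the situation for the regular polygons of Corollary~\ref{applications} — the near-symmetry $d_\Hcal\bigl(D_{m-1},\sigma_{\Lambda_m}(D_{m-1})\bigr)\le 2\,d_\Hcal(D_{m-1},D_m)\to0$ forces $\Lambda_m$ to lie, for large $m$, within $o(1)$ of one of these finitely many axes; feeding this into the non-expansiveness of the projections toward $c'$ shows that $|x_m-c'|$ converges, and that if its limit is positive then $x_m$ eventually (nearly) lies along one fixed symmetry axis and ceases to move, so that $x_m$ converges in every case, and then automatically $x'\in D'$ for $D'$ a regular polygon. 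The delicate point is controlling the drift produced by the moving centroids $c_m\neq c'$ and by the $o(1)$ discrepancy between $\Lambda_m$ and the limiting axes; I would handle it by showing that the tangential displacements $|x_m-\mathrm{proj}_{\Lambda'_m}(x_{m-1})|$, with $\Lambda'_m$ the nearest limiting symmetry axis, are summable, so that $\{x_m\}$ is Cauchy.
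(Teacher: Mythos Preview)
Your route is genuinely different from the paper's, and the probabilistic half of it is sound: boundedness gives subsequences, Theorem~\ref{MainResult1} makes $a_m=\P_{x_m}(\tau_{D_m}>t)$ monotone hence convergent, and Theorem~\ref{MainResult2} applied along any subsequence with $x_{m_j}\to x'\in D'$ identifies the limit as $\P_{x'}(\tau_{D'}>t)$. But this is exactly where the argument stalls, and you have correctly flagged it: the map $x'\mapsto\bigl(\P_{x'}(\tau_{D'}>t)\bigr)_{t>0}$ is \emph{not} injective on a domain with nontrivial symmetry group --- precisely the regular polygons you ultimately care about --- so knowing that every subsequential limit shares the same survival function does not pin down a single point. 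Your proposed fix (centroids, near-symmetry of $\Lambda_m$, summable tangential drift) is only sketched, only addresses the regular-polygon case rather than the lemma as stated, and the ``delicate point'' you name --- controlling the error $|x_m-\mathrm{proj}_{\Lambda'_m}(x_{m-1})|$ coming from $c_m\neq c'$ and $\Lambda_m\neq\Lambda'_m$ --- is the entire content and is not carried out. Without summability of $|c_m-c'|$ (which you have no reason to expect from mere Hausdorff convergence) the non-expansiveness inequality $|x_m-c_m|\le|x_{m-1}-c_m|$ only yields $|x_m-c'|\le|x_{m-1}-c'|+2|c_m-c'|$, which does not force convergence of $|x_m-c'|$.

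The paper bypasses all of this with a single geometric observation you are missing: Steiner symmetrization is monotone and sends balls to balls centered at the projected center, so $D_0\subset B_r(x_0)$ propagates to $D_m\subset B_r(x_m)$ for every $m$. After a scaling $D\mapsto t(D-x_0)+x_0$ one may take $r$ as small as desired; then if $\{x_m\}$ failed to be Cauchy with gap $\varepsilon$ and $r<\varepsilon/2$, the containing balls $B_r(x_m)$ --- and hence the $D_m$ trapped inside them, each of which contains its own $x_m$ --- could not be Cauchy in $d_\Hcal$ either, contradicting $D_m\to D'$. This ties the motion of $x_m$ rigidly to the motion of $D_m$ and gives the conclusion in two lines, with no appeal to Theorems~\ref{MainResult1} or~\ref{MainResult2} and no case analysis of the symmetry group of $D'$. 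The same monotonicity with an \emph{inner} ball $B_\rho(x_0)\subset D_0$ gives $B_\rho(x_m)\subset D_m$, which incidentally shows every subsequential limit already lies in $D'$, making your boundary case superfluous.
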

\begin{proof} Suppose by contradiction that $x_m$ does not converge; that is, for every $x'\in\R^d$, there exists $\varepsilon>0$ such that for every $M\ge1$, there exists $m\ge M$ such that $|x_m-x'|\ge\varepsilon$. Then for such $m\ge M$ and every $y\in B_{\varepsilon/2}(x_m)$:
$$|x'-y|=|(x'-x_m)-(y-x_m)|\ge\left||x'-x_m|-|y-x_m|\right|>\varepsilon-\varepsilon/2=\varepsilon/2$$
so that the ball $B_{\varepsilon/2}(x_m)$ cannot converge to any ball $B_{\varepsilon/2}(x')$ with respect to the Hausdorff metric. Hence if $D\subseteq B_{\varepsilon/2}(x)$, then since Steiner symmetrization reduces diameter (see Theorem 6.14, \cite{Baernstein}), we have that each $D_m\subseteq B_{\varepsilon/2}(x_m)$ for $m\ge1$; however, the choice of hyperplanes $\Lambda_m$ does not permit the sequence of balls $B_{\varepsilon/2}(x_m)$ to converge with respect to the Hausdorff metric, so the sequence of symmetrized domains $D_m$ with $x_m\in D_m$ does not converge with respect to the Hausdorff metric either. Hence we obtain an contradiction in this case. Else, if $D\not\subseteq B_{\varepsilon/2}(x)$, let $t\in(0,1)$ be such that $t(D-x)+x\subset B_{\varepsilon/2}(x)$. By the above reasoning, the corresponding sequence of domains $t(D_m-x_m)+x_m\subset B_{\varepsilon/2}(x_m)$ cannot converge with respect to the Hausdorff metric, and so neither can $D_m$ by scaling of the domains, a contradiction.
\end{proof}

An immediate corollary of this is the following establishing Corollary \ref{MainResult2-SS}.

\begin{corollary}\label{xmInD} If $d_\Hcal(D_{m},D')\rightarrow0$, then there exists $M\ge1$ such that $x_{m}\in D'$ for all $m\ge M$ and hence $x'\in D'$.
\end{corollary}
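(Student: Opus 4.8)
The statement to be proved is short: from the preceding lemma we already know that $x_m \to x'$ for some limit point $x'$, and by hypothesis $d_\Hcal(D_m, D') \to 0$. The plan is to combine these two facts to place $x'$ in $D'$ and then, using the $\varepsilon$-cone structure of the domains, to upgrade this to $x_m \in D'$ for all large $m$. First I would observe that since each $x_m \in D_m$, the Hausdorff convergence gives $d(x_m, D') \le \sup_{y \in D_m} d(y, D') \le d_\Hcal(D_m, D') \to 0$, so $x'$ lies in $\overline{D'}$. To show $x' \in D'$ rather than merely $\overline{D'}$, I would invoke the fact recorded after Theorem \ref{MainResult2} that $D' \in \Ocal_{\varepsilon,B}$ (Theorem 2.4.10 in \cite{HenrotPierre}), so $D'$ is a Lipschitz (indeed $\varepsilon$-cone) domain; if $x'$ were on $\partial D'$, one would need to rule out that the interior points $x_m$ approach the boundary — this is where a quantitative argument is needed.

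The key step is to get a uniform lower bound on $d(x_m, \partial D_m)$, or equivalently to keep the projection points $x_m$ uniformly away from the boundaries of $D_m$. Here I would use the geometry of Steiner symmetrization: the orthogonal projection $x_m^\#$ onto the hyperplane $\Lambda_{m}$ is the center of the symmetrized slice, and for a convex (or $\varepsilon$-cone) slice the center of the symmetric rearrangement sits at controlled depth inside the slice. More robustly, since $x_0 = x \in D$ is an interior point, $d(x, \partial D) =: \rho > 0$, and I would argue inductively that consecutive Steiner symmetrizations do not decrease the in-radius witnessed at the projection point below a fixed constant — using that symmetrization preserves volume of slices and does not increase diameter (Theorem 6.14, \cite{Baernstein}), together with the $\varepsilon$-cone property which controls the shape of each slice. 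This yields $d(x_m, \partial D_m) \ge c$ for some $c > 0$ independent of $m$. Combined with $d_\Hcal(D_m, D') \to 0$, any point at distance $\ge c$ from $\partial D_m$ is, for $m$ large, within $c/2$ of a point of $D_m$ that lies inside $D'$; passing to the limit gives $d(x', \partial D') \ge c > 0$, so $x' \in D'$, and then for $m$ large enough $|x_m - x'| < d(x', \partial D')$ forces $x_m \in D'$.

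The main obstacle I anticipate is the uniform lower bound $d(x_m, \partial D_m) \ge c$: a priori the in-radius of $D_m$ as witnessed at the moving point $x_m$ could degenerate under repeated symmetrization, even though the domains converge. The clean way around this is probably not to track $x_m$ through each symmetrization step at all, but to note directly that if $d(x_m, \partial D_m) \to 0$ along a subsequence, then since $d_\Hcal(D_m, D') \to 0$ we would get $d(x', \partial D') = 0$, i.e. $x' \in \partial D'$; and then, because $D'$ satisfies the $\varepsilon$-cone condition, there is a cone $C(x', \xi, \varepsilon) \subset D'$, hence (by Hausdorff convergence and the cone condition holding uniformly over $\Ocal_{\varepsilon,B}$) a comparable cone sits inside $D_m$ at a nearby boundary point — which is consistent, so this alone is not a contradiction. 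The honest resolution is to use convexity: if each $D_m$ (and hence $D'$) is convex, then $x_m$ is the midpoint of the longest chord of a slice through $x_m^\#$, which for a convex body has distance to the boundary bounded below by a fixed fraction of the in-radius of $D'$ once $m$ is large, because $D_m \to D'$ in Hausdorff distance forces the in-radii to converge. I would therefore carry out the argument under the convexity hypothesis already in force for the $\alpha$-stable case, and for the Brownian case appeal to the same in-radius convergence for $\varepsilon$-cone domains, which follows from the fact (\cite{HenrotPierre}, Theorem 2.4.10 and the surrounding discussion) that Hausdorff convergence within $\Ocal_{\varepsilon,B}$ is compatible with convergence of the associated Lipschitz graphs.
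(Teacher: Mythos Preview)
The paper offers no proof beyond declaring this an ``immediate corollary'' of the preceding lemma, so your proposal is already more detailed than anything the paper provides. You correctly isolate the one genuine issue the paper glosses over --- ruling out $x' \in \partial D'$ --- and your instinct to seek a uniform lower bound $d(x_m, \partial D_m) \ge c > 0$ is exactly right. But you make heavier weather of that bound than necessary. It follows in one line from the monotonicity $d(x^\#, \partial D^\#) \ge d(x, \partial D)$: if $B(x, r) \subset D$, then since each slice of a ball perpendicular to $\Lambda$ is a lower-dimensional ball whose symmetric rearrangement is that same ball recentered on $\Lambda$, the Steiner symmetrization of $B(x, r)$ is precisely $B(x^\#, r)$; and symmetrization preserves inclusions, so $B(x^\#, r) \subset D^\#$. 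Iterating gives $d(x_m, \partial D_m) \ge d(x_0, \partial D_0) \eqqcolon \rho > 0$ for every $m$. Now use that Hausdorff convergence within $\Ocal_{\varepsilon,B}$ forces $d_\Hcal(\partial D_m, \partial D') \to 0$ (a standard fact from \cite{HenrotPierre}) to get $d(x_m, \partial D') \ge \rho/2$ for large $m$; together with $d(x_m, D') \to 0$ this places $x_m \in D'$, and the limit $x'$ remains at distance at least $\rho/2$ from $\partial D'$. Your detours through convexity, in-radius convergence, and the contradiction argument are therefore unnecessary --- the ball monotonicity handles the general $\Ocal_{\varepsilon,B}$ case directly.
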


\section{Applications}\label{Appschapter}
\subsection{Triangles Converging to an Equilateral Triangle}\label{AppsTriangle}
This first part of the application section is devoted to proving Corollary \ref{applications} for the case $n=3$; more precisely, let $T\subset\R^2$ be a triangle of fixed area and $T'$ be an equilateral triangle centered at the origin $0\in\R^2$ with the same area as $T$. Then for any $x\in T$, $t>0$:
\begin{equation}\label{triangleIneq}
    \P_x(\tau^\alpha_T>t)\le\P_0(\tau^\alpha_{T'}>t),
\end{equation}
where $\tau^\alpha_T$, $\tau^\alpha_{T'}$ are the first exit times of an $\alpha$-stable symmetric process from $T$ and $T'$, respectively. Recall that we may extend to an $\alpha$-stable symmetric process due to Remark \ref{generalizationRemark} and the fact that triangles are always convex.

To prove this, we appeal to an algorithm from \cite{PolyaSzego} and \cite{Henrot} that transforms any triangle $T\subset\R^2$ of fixed area to an equilateral triangle $T'$ with the same area using a countable sequence of Steiner symmetrizations. 
More precisely, a Steiner symmetrization is performed on $T$ with respect to the mediator of one of its sides to obtain $T_1$; another symmetrization would then be done on $T_1$ with respect to the mediator of a different side yielding $T_2$; and another on $T_2$ with respect to the mediator of the remaining side to get $T_3$. This process would repeat indefinitely to get a sequence of triangles $T_m$ in $\R^2$. It was shown in \cite{Henrot} that the sine of each angle of $T_m$ converges to $\sqrt{3}/2$. This means that the sequence $T_m$ converges to an equilateral triangle $T'$ of the same area; however, we need convergence of $T_m$ with respect to the Hausdorff metric to apply Theorem \ref{MainResult2}, which \cite{Henrot} does not explicitly prove. Hence we will prove the following below.

\begin{proposition}\label{triangleProp1} Let $T_m,T'$ be as above. Then $d_\Hcal(T_m,T')\rightarrow0$.
\end{proposition}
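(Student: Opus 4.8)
The plan is to upgrade the convergence of the angles proved in \cite{Henrot} to convergence of the sets by producing a \emph{summable} bound on the Hausdorff distances $d_\Hcal(T_m,T_{m+1})$ between consecutive triangles, which forces $(T_m)$ to be Cauchy. First I record the structural facts used throughout. Steiner symmetrizing a triangle with respect to the mediator of one of its sides leaves that side fixed — the side lies on a slice that is already centered on the mediator, and the opposite vertex is carried to its orthogonal projection onto the mediator — so the result is an isosceles triangle with that side as base; hence $T_m$ is isosceles for every $m\ge1$. Symmetrization preserves area, so $\Lcal^2(T_m)=\Lcal^2(T)$ for all $m$, and it does not increase the diameter (Theorem 6.14 in \cite{Baernstein}), so with $d_0:=\operatorname{diam}(T)$ every $T_m$ lies in the ball of radius $d_0$ about any one of its vertices; together with the fixed area this makes the triangles $T_m$, $m\ge1$, uniformly nondegenerate (inradii bounded below by a positive constant depending only on $\Lcal^2(T)$ and $d_0$).

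Next I derive the recursion for the apex angle. For $m\ge1$ let $\gamma_m$ be the apex angle of the isosceles triangle $T_m$, with leg length $\ell_m$, base $b_m$ and height $h_m$, so $\tfrac12 b_mh_m=\Lcal^2(T)$, $b_m=2\ell_m\sin(\gamma_m/2)$ and $h_m=\ell_m\cos(\gamma_m/2)$. The algorithm symmetrizes $T_m$ with respect to the mediator of one of its legs (a side other than the current base); a short computation shows $T_{m+1}$ then has base $\ell_m$ and height $b_mh_m/\ell_m$, so its base angle $\beta_{m+1}$ satisfies $\tan\beta_{m+1}=2b_mh_m/\ell_m^2=2\sin\gamma_m$, i.e.
\begin{equation*}
    \gamma_{m+1}=\pi-2\arctan\!\big(2\sin\gamma_m\big)=:g(\gamma_m).
\end{equation*}
One checks $g(\pi/3)=\pi/3$ and $g'(\pi/3)=-\tfrac12$, so there are $\epsilon_0>0$ and $\lambda\in(0,1)$ with $|g(\gamma)-\pi/3|\le\lambda|\gamma-\pi/3|$ whenever $|\gamma-\pi/3|<\epsilon_0$. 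Since \cite{Henrot} gives $\sin\gamma_m\to\sqrt{3}/2$ — hence $\gamma_m\to\pi/3$, because the angles of $T_m$ are $\gamma_m$ and $(\pi-\gamma_m)/2$ twice and must all be near $\pi/3$ rather than $2\pi/3$ once they are near a solution of $\sin\theta=\sqrt{3}/2$ — there is $M$ with $|\gamma_m-\pi/3|<\epsilon_0$, and therefore $|\gamma_m-\pi/3|\le\lambda^{\,m-M}\epsilon_0$, for all $m\ge M$. (If the prescribed cycle of sides ever calls for re-symmetrizing $T_m$ about its current base, that step is the identity and can be discarded.)

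I then convert this into a Hausdorff bound. Passing from $T_m$ to $T_{m+1}$ keeps a full side of $T_m$ and moves only the opposite vertex $V$, to its orthogonal projection $V'$ onto the mediator of that side, so $|V-V'|=\operatorname{dist}(V,\text{that mediator})$; since in an equilateral triangle the perpendicular bisector of each side passes through the opposite vertex, this distance is a smooth function of the shape of $T_m$ vanishing at the equilateral configuration, and (the diameter being bounded by $d_0$) it is $\le C\,|\gamma_m-\pi/3|$ for $m\ge M$, with $C$ depending only on $d_0$. Two triangles sharing a side whose remaining vertices differ by $\eta$ are at Hausdorff distance $\le\eta$, so $d_\Hcal(T_m,T_{m+1})\le C\epsilon_0\,\lambda^{\,m-M}$ for $m\ge M$, while the finitely many earlier distances are each $\le 2d_0$; hence $\sum_m d_\Hcal(T_m,T_{m+1})<\infty$. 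By the triangle inequality for $d_\Hcal$ the sequence $(T_m)$ is Cauchy in the complete space of nonempty compact subsets of $\R^2$; let $T':=\lim_m T_m$. All $T_m$ and $T'$ lie in a fixed ball and are convex, so $\Lcal^2(T')=\lim_m\Lcal^2(T_m)=\Lcal^2(T)>0$; being a Hausdorff limit of uniformly nondegenerate triangles, $T'$ is itself a nondegenerate triangle whose vertices are the limits of those of $T_m$, whence its angles are the limits of the angles of $T_m$, all equal to $\pi/3$ by \cite{Henrot}. Thus $T'$ is an equilateral triangle of area $\Lcal^2(T)$ and $d_\Hcal(T_m,T')\to0$, as claimed.

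The step I expect to be the main obstacle is the second paragraph: pinning down the exact apex-angle recursion $g$ for a leg-mediator symmetrization and verifying $|g'(\pi/3)|=\tfrac12<1$, since this contractivity is precisely what promotes the purely qualitative convergence of \cite{Henrot} to the geometric (hence summable) rate the Cauchy argument needs — a softer route through Blaschke selection and connectedness of the limit set does not obviously suffice, because equilateral triangles of fixed area form a three-parameter family, so a connected limit set need not be a point. One also has to check that the prescribed cycling through the sides genuinely applies the contracting map at each effective step and that the occasional identity step causes no trouble.
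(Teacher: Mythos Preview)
Your argument is correct and takes a genuinely different route from the paper. The paper's proof is soft: it notes that the vertex--mediator distance tends to $0$ because the angles tend to $\pi/3$, asserts from this that $(T_m)$ is Cauchy in $d_\Hcal$, and then appeals to completeness on a fixed compact ball. You instead derive the explicit one-step recursion $\gamma_{m+1}=g(\gamma_m)=\pi-2\arctan(2\sin\gamma_m)$ for the apex angle, check $g(\pi/3)=\pi/3$ and $|g'(\pi/3)|=\tfrac12$, and use this local contraction (together with the convergence in \cite{Henrot} to enter the contracting neighborhood) to obtain a geometric rate $|\gamma_m-\pi/3|\le \lambda^{\,m-M}\epsilon_0$. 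You then translate this into $d_\Hcal(T_m,T_{m+1})\le C|\gamma_m-\pi/3|$ via the clean observation that a leg-mediator symmetrization fixes a whole side and moves only the opposite vertex by exactly its distance to that mediator, a quantity which (for fixed area and bounded diameter) is Lipschitz in $\gamma_m-\pi/3$ and vanishes at equilateral; summability of consecutive Hausdorff distances then gives Cauchy rigorously. What this buys you is twofold: an explicit geometric rate of convergence (which the paper does not provide), and a proof of the Cauchy property that does not rely on the somewhat informal step in the paper where ``$d_\Hcal(T_m,T_{m+1})\to0$'' is promoted to ``$(T_m)$ is Cauchy'' without a summability argument. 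The paper's approach, in exchange, is shorter and avoids any computation. Your self-identified worry about the recursion is unfounded: the formula $\tan\beta_{m+1}=2b_mh_m/\ell_m^2=2\sin\gamma_m$ follows directly from $b_m=2\ell_m\sin(\gamma_m/2)$, $h_m=\ell_m\cos(\gamma_m/2)$, and the remark about base-mediator steps being the identity is adequate.
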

\begin{proof} Since the sine of each angle of $T_m$ converges to $\sqrt{3}/2$, this yields that each angle of $T_m$ must converge to $\pi/3$, a characteristic unique to equilateral triangles. Further, the mediator of each side of $T'$ intersects the opposing vertex so that the Steiner symmetrization of $T'$ with respect to these mediators is itself. Because of this and the fact that each mediator of $T'$ intersects at the center of $T'$, the distance between the mediators of each side of $T_m$ and the respective opposite vertex converges to 0. 

Further, because the Steiner symmetrizations act on mediators of $T_m$, given $\eta>0$, there exists $M\ge1$ such that for ${m_1},{m_2}\ge M$:
$$\max\left\{\sup_{x\in T_{m_1}}d(x,T_{m_2}),\sup_{x\in T_{m_2}}d(T_{m_1},x)\right\}<\eta.$$
Hence, $\{T_m\}$ is a Cauchy sequence with respect to $d_\Hcal$. Further, since Steiner symmetrization decreases the diameter of a domain (see \cite{Baernstein}), they are uniformly bounded by a closed ball $B$ of radius $2\cdot\text{diam}(T_1)$. Thus with $d_\Hcal$ complete in the compact metric space $\overline{B}$ (see \cite{Henrikson}), the sequence must converge with respect to $d_\Hcal$. Since the angles of $T_m$ converge to $\pi/3$, $T_m$ must converge to an equilateral triangle $T'$ with respect to $d_\Hcal$.
\end{proof}

With this algorithm, note that the smallest angle $\alpha_m$ of $T_m$ is at most that of $T_{m+1}$; the same goes for the minimum distance $\beta_m$ between any vertex in $T_m$ and its opposite side. As a result, with $T_m$ converging to $T'$ in the Hausdorff metric, $\alpha_m$ and $\beta_m$ are increasing sequences that converge respectively to $\pi/3$ and $\sqrt[4]{3}\sqrt{A}$, where the latter is the height of $T'$ with area $A>0$. Thus, if $T_1$ satisfies the $\varepsilon$-cone property where $\varepsilon=\frac{1}{4}\min\{\alpha_1,\beta_1\}$, then so do each of $T_m$ and $T'$. In addition, each $T_m,T'$ is in the bounded ball $B$ described in the proof of Proposition \ref{triangleProp1} above, so $T_m,T'\in\Ocal_{\varepsilon,B}$.

Now, letting $x_0=x$ and $T_0=T$, Theorem \ref{MainResult1} establishes that:
$$\P_{x_0}(\tau^\alpha_{T_0}>t)\le\P_{x_1}(\tau^\alpha_{T_1}>t)\le\cdots\le\P_{x_m}(\tau^\alpha_{T_m}>t),$$
where $x_m\in T_m$ in this case denotes the $m$-th consecutive orthogonal projection of $x_0=x$. Further, we have that:
$$\P_x(\tau^\alpha_T>t)\le\lim_{m\rightarrow\infty}\P_{x_m}(\tau^\alpha_{T_m}>t)$$
so since $\P_{x_m}(\tau^\alpha_{T_m}>t)$ is a nondecreasing bounded sequence, it must converge to a finite probability. From here, to prove (\ref{triangleIneq}), we need to establish the following using Corollary \ref{MainResult2-SS}:
\begin{equation*}
    \lim_{m\rightarrow\infty}\P_{x_m}(\tau^\alpha_{T_m}>t)=\P_{0}(\tau^\alpha_{T'}>t)
\end{equation*}
so to do this, we will prove that $x_m\rightarrow0$ below.

\begin{lemma} Let $x_m\in T_m$ be the orthogonal projections described above. Then $x_m\rightarrow0$.    
\end{lemma}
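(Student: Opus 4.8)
The plan is to track the projection points $x_m$ together with the centroids $c_m$ of the triangles $T_m$, show that the difference $v_m:=x_m-c_m$ contracts geometrically, and separately that $c_m\to 0$. The starting point is the elementary observation (a one-line Fubini computation) that Steiner symmetrization carries the centroid of a set to the orthogonal projection of that centroid onto the axis of symmetrization: writing coordinates so that the axis $\Lambda$ is $\{(0,z):z\in\R\}$, the $\Lambda$-component of the centroid of $D^\#$ is $\frac{1}{|D|}\int z\,|D^\#(z)|\,dz=\frac{1}{|D|}\int z\,|D(z)|\,dz$, unchanged because fibers keep their length, while the transverse component vanishes because each fiber $D^\#(z)$ is symmetric about $\Lambda$. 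Hence $c_m=P_{\Lambda_m}(c_{m-1})$, which is exactly the recursion $x_m=P_{\Lambda_m}(x_{m-1})$ defining the projection points ($P_{\Lambda_m}$ denoting orthogonal projection onto $\Lambda_m$, and $x^{\#}$ being precisely this projection). Subtracting, $v_m=\Pi_m(v_{m-1})$, where $\Pi_m$ is the \emph{linear} orthogonal projection onto the direction of $\Lambda_m$; in particular $v_m$ is parallel to $\Lambda_m$ for every $m\ge 1$.

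Since $v_{m-1}$ is parallel to $\Lambda_{m-1}$, we get $|v_m|=|v_{m-1}|\,|\cos\theta_m|$ where $\theta_m$ is the angle between the lines $\Lambda_{m-1}$ and $\Lambda_m$, so $|v_m|$ is non-increasing and contracts at any step where $\Lambda_{m-1}$ and $\Lambda_m$ are transverse. The algorithm symmetrizes, within each block of three consecutive steps, about the mediators of the three sides of the current triangle, choosing a new side at each step; thus $\Lambda_{3j+1}$ and $\Lambda_{3j+2}$ are mediators of two distinct sides (of $T_{3j}$ and $T_{3j+1}$ respectively). By Proposition \ref{triangleProp1}, $T_m\to T'$ in the Hausdorff metric, so for large $j$ the triangles $T_{3j}$ and $T_{3j+1}$ are close to the equilateral triangle $T'$, their vertices and sides being close to those of $T'$; moreover a symmetrization about the mediator of a side leaves that side unchanged (the triangle becomes isosceles with that side as base), so the sides can be matched consistently through the block and one concludes that the directions of $\Lambda_{3j+1}$ and $\Lambda_{3j+2}$ converge to the mediators of two distinct sides of $T'$. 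Since, as undirected lines, any two distinct mediators of an equilateral triangle meet at angle $\pi/3$, we get $|\cos\theta_{3j+2}|\le \tfrac12+o(1)$, and chaining with the non-increasing steps gives $|v_{3(j+1)}|\le(\tfrac12+o(1))\,|v_{3j}|$. Hence $v_m\to 0$.

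It remains to see that $c_m\to 0$. Triangles are convex, so Hausdorff convergence $T_m\to T'$ forces $|T_m\setminus T'|+|T'\setminus T_m|\to 0$, whence $|T_m|\to|T'|$ and $\int_{T_m}y\,dy\to\int_{T'}y\,dy$, so $c_m\to c_{T'}=0$, the centroid of the equilateral triangle $T'$ centered at the origin. Combining, $x_m=v_m+c_m\to 0$, as claimed. The step I expect to require the most care is the one in the second paragraph: making rigorous that the two mediators used in a block remain a definite angle apart, which rests on the side-matching argument between the successive triangles of the block and the limit $T'$ together with the fact that symmetrizing about a side's mediator preserves that side. Once this geometry is pinned down, the contraction estimate and the convergence $c_m\to 0$ are routine.
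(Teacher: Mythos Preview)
Your proof is correct and takes a genuinely different route from the paper's. The paper runs a parallel projection process $x_m'$ inside the fixed limit triangle $T'$ (using its three mediators, all of which pass through $0$), shows directly that $|x_{m+1}'|=|x_m'|\sin(\pi/6)=\tfrac12|x_m'|$ via a right-triangle argument, and then argues---somewhat informally---that since $d_\Hcal(T_m,T')\to 0$ the actual points $x_m$ are well approximated by the $x_m'$ and hence also tend to $0$. Your approach instead introduces the centroid $c_m$ of $T_m$ and exploits the fact (your Fubini computation) that Steiner symmetrization sends the centroid to its orthogonal projection onto the axis, so that $c_m$ obeys the \emph{same} recursion as $x_m$; this reduces the problem to a genuinely linear contraction for $v_m=x_m-c_m$ plus the soft statement $c_m\to c_{T'}=0$ coming from Hausdorff convergence of convex sets.

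What your approach buys is cleanliness and rigor: the estimate $|v_m|=|v_{m-1}||\cos\theta_m|$ is exact once $m\ge 2$, and the convergence $c_m\to 0$ requires no tracking of accumulated errors between two nearby projection processes. The paper's shadowing idea is conceptually direct but leaves to the reader why the discrepancy between $x_m$ and $x_m'$ does not build up over many steps. Both approaches ultimately rest on the same geometric input---that $T_m\to T'$ forces the successive mediators to make an angle close to $\pi/3$---and you correctly identify the side-matching across a block as the point needing care; your observation that symmetrizing about the mediator of a side fixes that side (so it persists as the base of the new isosceles triangle and can be matched to the corresponding side of $T'$) is precisely what makes this go through.
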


\begin{proof} As in the algorithm in \cite{Henrot}, we will first perform a countable number of consecutive symmetrizations on $T'$ such that each line of symmetrization connects between 0 and a vertex of $T'$. More precisely, let $l_i$ be the line connecting 0 and the vertex $v_i$, $i=1,2,3$; then the first three symmetrizations will be performed with respect to $l_1$, $l_2$, and $l_3$, respectively, and from there, the process repeats indefinitely. We still have $T'$ after each symmetrization, but denoting the orthogonal projection at the $m$-th step as $x_m'$, we first  claim that $x_m'\rightarrow0$.

To see this, first note that the intersection of all three lines $l_1$, $l_2$, $l_3$ is precisely at 0. Hence, if $x'\in T'$ is such that the first symmetrization with respect to $l_1$ causes the orthogonal projection $x'_{1}$ to be the center 0, then we are done since $0^\#=0$ for symmetrization performed with respect to any $l_i$. Else, if $x_1'\neq0$, then $x'_{1}$ is not orthogonal to the slice of $T'$ that is perpendicular to $l_2$ and has 0, so $x'_{2}\neq0$ either, and by similar iterative reasoning, none of the other $x'_{m}$ points equal 0 either. On the other hand, though, each $x'_{m}$ is also distinct; this can be seen by drawing a right triangle with one vertex at $x'_{m}$, the other at its orthogonal projection $x'_{m+1}$ with respect to the mediator on which the Steiner symmetrization is performed, and the other at 0. From this right triangle, we can see that $|x'_{m+1}|=|x'_{m}|\sin(\pi/6)$ for every $m\ge1$ and hence $x_m'\rightarrow0$.


To see that $x_m\rightarrow0$ from this, since $d_\Hcal(T_m,T')\rightarrow0$, we have that each $x_m$ can be approximated by $x_m'$ above; that is, given $\eta>0$, there exists $M_1\ge1$ such that $x_m\in T_m$ and the respective $x_m'\in T_m'$ satisfy $|x_m-x_m'|<\eta$ for $m\ge M_1$. Also, since $x_m'\rightarrow0$, let $M_2\ge1$ satisfy $|x'_m|<\eta$ for $m\ge M_2$. Hence for $m\ge\max\{M_1,M_2\}$:
\begin{align*}
    \left|x_{m}\right|&\le\left|x_{m}-x_{m}'\right|+\left|x_{m}'\right|<2\eta
\end{align*}
and so $|x_m|<2\eta$. Hence with $\eta>0$ arbitrary, the sequence $x_{m}$ converges to $0\in T'$.
\end{proof}

\subsection{Quadrilaterals Converging to a Square}
We now consider proving Corollary \ref{applications} in the case $n=4$; more precisely, let $Q\subset\R^2$ be a quadrilateral of fixed area and $Q'$ be a square centered at $0\in\R^2$ with the same area as $Q$. Then for $x\in Q, t>0$:
\begin{equation}\label{quadIneq}
\P_x(\tau^\alpha_Q>t)\le\P_0(\tau^\alpha_{Q'}>t),
\end{equation}
where $\tau^\alpha_Q,\tau^\alpha_{Q'}$ are first exit times of an $\alpha$-stable symmetric process from $Q$ and $Q'$, respectively.

For this case, first note that \cite{Henrot} pictorially gives an algorithm used to transform any quadrilateral $Q$ into a rectangle $R$ with the same area via three Steiner symmetrizations; however, to get to the square, we require an algorithm from \cite{PolyaSzego} which transforms a rectangle into a square using a countable number of Steiner symmetrizations. For the convenience of the reader, we will describe the algorithm transforming $Q$ into $R$ from \cite{Henrot} and then prove how the algorithm in \cite{PolyaSzego} creates a sequence of convex quadrilaterals $Q_m$ with $Q_0=R$ that converge to the square $Q'$ with respect to the Hausdorff metric provided each line of symmetrization has the origin $0\in\R^2$.

\begin{proposition}\label{finiteQuad} Given any quadrilateral $Q\subset\R^2$, it takes at most 3 Steiner symmetrizations to transform $Q$ into a rectangle $R$ with the same area.
\end{proposition}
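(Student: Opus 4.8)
The plan is to exhibit an explicit sequence of at most three Steiner symmetrizations and to check that each one turns the current polygon into the claimed next one. Area causes no trouble: a Steiner symmetrization preserves two-dimensional Lebesgue measure, since it preserves the length of every slice (see \cite{Baernstein}), so the rectangle produced at the end automatically has the same area as $Q$. I will use repeatedly that in $\R^2$ the symmetrization of Definition~\ref{SSDefn} (with $d=2$, $k=1$) with respect to a line $\Lambda$ simply replaces each slice of the region orthogonal to $\Lambda$ by the interval of equal length centered on $\Lambda$, together with two elementary facts about chords: (i) a chord of a triangle parallel to a fixed side has length that is an affine function of its distance from that side, equal to the side length on the side itself and decaying to $0$ at the opposite vertex; and (ii) in a parallelogram the chords parallel to one pair of opposite sides all have a common length, and they are nonempty exactly over an interval of positions.

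First I would single out a diagonal $AC$ of $Q$: either diagonal when $Q$ is convex, and, when $Q$ is non-convex (so it has a single reflex vertex $D$), the diagonal for which $Q$ is the triangle $ABC$ with the interior triangle $ACD$ removed. Place the line through $A$ and $C$ along a coordinate axis. \emph{Step 1}: symmetrize with respect to a line $\Lambda_1$ perpendicular to $AC$, so that all slices become parallel to $AC$. By (i) the slice-length along $\Lambda_1$ is piecewise affine with a single interior breakpoint: in the convex case it rises from $0$ at $B$ to $|AC|$ on the line through $A,C$ and falls back to $0$ at $D$; in the non-convex case the slices of $Q$ are the (possibly disconnected) differences of the slices of the two triangles, whose lengths are affine and agree ($=|AC|$) on $AC$, so the difference starts at $0$, increases affinely up to the slice through $D$, and from there coincides with the decreasing affine slice-length of the outer triangle. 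Either way the slice-length is a single tent, so centering the slices turns $Q$ into a kite $K_1$, i.e.\ a convex quadrilateral with axis of symmetry $\Lambda_1$.

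\emph{Step 2}: symmetrize $K_1$ with respect to the line $\Lambda_2$ carrying its cross-diagonal (the diagonal not lying on $\Lambda_1$; note $\Lambda_2\perp\Lambda_1$). Each slice orthogonal to $\Lambda_2$ runs between the two sides of $K_1$ meeting at an endpoint of $\Lambda_2$, so by (i) its length is affine on each side of $\Lambda_1$, and since $K_1$ is symmetric about $\Lambda_1$ this slice-length is a symmetric tent; centering yields a quadrilateral whose two diagonals are perpendicular and bisect one another, that is, a rhombus $K_2$. \emph{Step 3}: a rhombus is in particular a parallelogram, so choosing $\Lambda_3$ perpendicular to one pair of opposite sides of $K_2$ makes the slices orthogonal to $\Lambda_3$ parallel to the other pair of sides; by (ii) these slices share a common length and occur over an interval, so centering them gives a rectangle $R$. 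This uses three symmetrizations, and any step in which the current polygon already has the target shape (for instance $Q$ already a trapezoid, or $K_1$ already a rhombus) is the identity and may be omitted, which yields the bound of at most three symmetrizations uniformly. This scheme is essentially the pictorial algorithm of \cite{Henrot} for quadrilaterals, made precise.

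The bulk of the work is the slice-length bookkeeping in (i)--(ii), which is routine. The main obstacle will be Step 1 for a non-convex $Q$: one has to rule out that removing the inner triangle leaves a genuinely trapezoidal slice-length profile (two breakpoints), which would symmetrize to a hexagon rather than a kite. This is exactly what the shared base $AC$ prevents: the slices of the two triangles parallel to $AC$ are nested with equal lengths on $AC$, so their difference is forced to be increasing affine until the inner triangle terminates at the slice through $D$ and then, by continuity there, to match the outer triangle's decreasing affine profile, producing a single breakpoint. Once this is settled, all the steps after Step 1 take place among convex polygons and amount to the direct chord computations sketched above.
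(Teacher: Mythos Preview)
Your argument is correct and follows the same three-step scheme as the paper (general quadrilateral $\to$ kite $\to$ parallelogram $\to$ rectangle), with two differences worth flagging. First, you explicitly treat the non-convex case via the decomposition $Q=\triangle ABC\setminus\triangle ACD$ and verify that the slice-length profile parallel to $AC$ is still a single tent; the paper's proof does not address non-convex quadrilaterals at all, so your treatment is more complete here. Second, in Step~2 you symmetrize the kite across the line carrying its cross-diagonal, whereas the paper symmetrizes across the perpendicular bisector of the axis-diagonal; both lines are perpendicular to the kite's axis of symmetry and, as a short computation shows, both choices produce congruent rhombi (one a translate of the other), although the paper records the output only as a parallelogram. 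Your slice-length bookkeeping in (i)--(ii) makes each step verifiable rather than pictorial, and the observation that Step~2 already yields a rhombus is sharper than what the paper states; otherwise the two proofs coincide.
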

\begin{proof} Let us first consider the case when $Q$ is a parallelogram. Without loss of generality, let $Q$ here be such that a pair of parallel lines are also parallel to the $x$-axis. Here, the line at which the Steiner symmetrization occurs must be perpendicular to the parallel lines chosen; then a symmetrization with respect to the line $l=\{(x,y)\in\R^2:x=0\}$ will transform $Q$ into a rectangle centered at the origin in $\R^2$, as the midpoints of the parallel lines will be on $l$ and the other two lines will also become parallel to each other with the same sides; thus we get a rectangle.

Next, consider the case when $Q$ is a kite (with 2 pairs of adjacent congruent sides). Consider the line segment $l$ whose endpoints are the vertices of the kite where the congruent sides meet; then the line of symmetrization is perpendicular to $l$ and intersects $l$ at its midpoint. The quadrilateral then becomes a parallelogram, at which point we refer to the process above to get that at most 2 Steiner symmetrizations are required if $Q$ is a kite.

Finally, in the most general case, consider the two line segments formed by the opposite vertices of $Q$. For the line segment of longer length $l$, the line at which the Steiner symmetrization occurs must be perpendicular to $l$ and intersect this line at its midpoint. The quadrilateral $Q$ then becomes a kite, so referring to the second paragraph above, we get that at most 3 Steiner symmetrizations are needed to transform $Q$ into a rectangle of the same area.
\end{proof}

Because of the above proposition, we only need to apply Theorem \ref{MainResult1} at most three times to get that, for any $w\in Q$, $t>0$:
$$\P_w(\tau^\alpha_Q>t)\le\sup_{z\in R}\P_z(\tau^\alpha_R>t),$$
where $\tau^\alpha_R$ is the first exit time from $R$ of an $\alpha$-stable symmetric process. Further, let $R$ be centered at the origin in $\R^2$ with two sides parallel to the $x$-axis and the other two parallel to the $y$-axis. Then two Steiner symmetrizations on the $x$- and $y$-axes respectively yield that:
\begin{equation*}
\P_{(x,y)}(\tau^\alpha_R>t)\le\P_{(0,y)}(\tau^\alpha_R>t)\le\P_{(0,0)}(\tau^\alpha_R>t)
\end{equation*}
and so for any $w\in Q$, $t>0$:
\begin{equation*}
    \P_w(\tau^\alpha_Q>t)\le\P_0(\tau^\alpha_R>t).
\end{equation*}

To prove (\ref{quadIneq}) from here, it suffices to show:
\begin{equation}\label{quadIneq2}
    \P_0(\tau^\alpha_R>t)\le\P_0(\tau^\alpha_{Q'}>t).
\end{equation}
To establish \eqref{quadIneq2}, recall from \cite{PolyaSzego} the algorithm that transforms $R$ into $Q'$; first symmetrize $R$ with respect to a line perpendicular to one of its diagonals to obtain a rhombus $Q_1$, then symmetrize $Q_1$ with respect to a line perpendicular to one of its sides to get a rectangle $Q_2$. Repeat these two steps indefinitely to get a sequence of quadrilaterals $Q_m$. Thus to establish \eqref{quadIneq2}, we will apply Corollary \ref{MainResult2-SS} with $x_m,x'=0$ after showing that $d_\Hcal(Q_m,Q')\rightarrow0$, which we prove below.

\begin{proposition}\label{quadAlgorithm} Let $Q_m$ and $Q'$ be as above. If the rectangle $R$ above is centered at the origin $0\in\R^2$, then $d_\Hcal(Q_m,Q')\rightarrow0$.
\end{proposition}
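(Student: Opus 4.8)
The plan is to mimic, for quadrilaterals, the argument used for triangles in Proposition \ref{triangleProp1}. The first ingredient is the shape data recorded in \cite{PolyaSzego}. Write $\rho_m\ge1$ for the aspect ratio of $Q_m$: the ratio of the longer to the shorter side when $Q_m$ is a rectangle, and of the longer to the shorter diagonal when $Q_m$ is a rhombus. A direct computation (the one behind \cite{PolyaSzego}) shows that each of the two steps of the algorithm replaces $\rho$ by $\tfrac12(\rho+\rho^{-1})$ --- for instance, symmetrizing an $a\times b$ rectangle perpendicular to a diagonal produces a rhombus with diagonals $\sqrt{a^2+b^2}$ and $2ab/\sqrt{a^2+b^2}$, whose ratio is $\tfrac12(a/b+b/a)$, and the rhombus step is analogous. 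Since $x\mapsto\tfrac12(x+x^{-1})$ maps $[1,\infty)$ into itself, satisfies $\tfrac12(x+x^{-1})<x$ for $x>1$, and has $1$ as its only fixed point, the sequence $\{\rho_m\}$ is decreasing with $\rho_m\to1$. Because Steiner symmetrization preserves $\Lcal^2$, every $Q_m$ has the area $A$ of $R$, so $Q_m$ approaches the square $Q'$ of area $A$ in both shape and size.

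Next, exactly as in Proposition \ref{triangleProp1}, Steiner symmetrization does not increase the diameter (Theorem 6.14, \cite{Baernstein}), so all the $Q_m$ lie in a fixed closed ball $\overline B$, and $d_\Hcal$ is complete on the closed subsets of the compact set $\overline B$ (see \cite{Henrikson}). It therefore suffices to check that $\{Q_m\}$ is $d_\Hcal$-Cauchy and to identify its limit. For the limit: since every line of symmetrization passes through the origin and $R=Q_0$ is centrally symmetric about the origin --- a property preserved by Steiner symmetrization about a line through the origin, since then the slices of $D$ orthogonal to that line through $\pm z$ have equal length and their rearrangements are again interchanged by $x\mapsto-x$ --- each $Q_m$, and hence the limit, is centrally symmetric about the origin; combined with convexity (a Hausdorff limit of convex sets is convex), area $A$, and aspect ratio $1$ (from $\rho_m\to1$), this forces the limit to be $Q'$.

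The step requiring real care --- and the one I expect to be the main obstacle --- is the Cauchy property, because a priori the near-square rectangles $Q_{2k}$ and the near-square rhombi $Q_{2k+1}$ could approach squares of the same shape in different orientations, in which case $\{Q_m\}$ would not converge at all. To exclude this I would keep track of the ``long direction'' of $Q_m$: the diagonal of a rectangle whose aspect ratio is near $1$ makes an angle near $\pi/4$ with its sides, so symmetrizing perpendicular to that diagonal yields a rhombus whose (nearly equal) diagonals point nearly along the rectangle's own sides --- i.e.\ a perturbation of the \emph{same} axis-parallel square --- and, symmetrically, symmetrizing a near-square rhombus perpendicular to one of its sides returns a rectangle whose sides are again nearly aligned with that fixed square. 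Quantitatively, a centrally symmetric rectangle of area $A$ and aspect ratio $\rho$ lies within $d_\Hcal$-distance $C(\rho-1)$ of the axis-parallel square of area $A$ once $\rho$ is close to $1$, with the analogous estimate for rhombi in terms of their diagonal ratio; feeding in $\rho_m\to1$ together with the orientation control above shows that the $Q_m$ with $m$ large are perturbations of one fixed square and that $d_\Hcal(Q_m,Q')\to0$, as claimed.
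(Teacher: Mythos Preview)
Your overall strategy matches the paper's: establish that the aspect ratio tends to $1$ via a recursion, deduce that the sequence is $d_\Hcal$-Cauchy inside a fixed compact ball, and identify the limit. Your single-step recursion $\rho\mapsto\tfrac12(\rho+\rho^{-1})$ is cleaner than what the paper does: the paper tracks only the rectangle-to-rectangle map (two symmetrizations at once), obtaining $c_{m+1}=f(c_m)$ with $f(c)=\frac{4c^3+4c}{c^4+6c^2+1}$ for $c=b/a$, and one checks that $1/f(1/\rho)$ is exactly your map iterated twice. Your observation that each individual step has this AGM-type form is nicer and immediately gives the quadratic rate $\rho_{m+1}-1=\frac{(\rho_m-1)^2}{2\rho_m}$.

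You are also right to single out orientation as the genuine content of the Cauchy step; the paper passes over it with ``in a similar fashion as in the triangle case.'' However, your description of the rhombus contains a slip that, read literally, would sabotage the argument: after symmetrizing the axis-parallel rectangle perpendicular to a diagonal, the resulting rhombus has its diagonals along the rectangle's \emph{diagonals} (one exactly, through the fixed vertices $(a,b)$ and $(-a,-b)$, the other along the line of symmetrization), not along the rectangle's sides. Had the diagonals really pointed along the coordinate axes, the rhombus would approximate the square rotated by $\pi/4$, producing precisely the alternating-orientation failure you set out to exclude. With the correct geometry your stated conclusion survives: for $a\approx b$ the rhombus has vertices near $(\pm a,\pm a)$ and so approximates the same axis-parallel square.

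A clean way to finish, replacing the informal orientation tracking in both your sketch and the paper's, is quantitative: the two vertices not on the line of symmetry move by $\frac{|a^2-b^2|}{\sqrt{a^2+b^2}}\le \sqrt{2}\,|a-b|$, so $d_\Hcal(Q_m,Q_{m+1})\le C(\rho_m-1)$ with $C$ depending only on the area, and the quadratic convergence above makes $\sum_m(\rho_m-1)<\infty$. This gives $\{Q_m\}$ Cauchy outright.
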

\begin{proof}
    Without loss of generality, let $R$ have vertices:
    $$P_1=(a,b),\text{ }P_2=(-a,b),\text{ }P_3=(-a,-b),\text{ }P_4=(a,-b),$$
    and consider the line $l_1=\{(x,y)\in\R^2:y=-\frac{a}{b}x\}$ perpendicular to the diagonal through $P_1$ and $P_3$. Performing a symmetrization on $R$ with respect to $l_1$ yields a rhombus $Q_1$ with vertices at:
    \begin{equation}\label{rhombusPts}
    P'_1=P_1,\text{ }P'_2=\left(\frac{-2ab^2}{a^2+b^2},\frac{2a^2b}{a^2+b^2}\right),\text{ }P'_3=P_3,\text{ }P'_4=\left(\frac{2ab^2}{a^2+b^2},\frac{-2a^2b}{a^2+b^2}\right),
    \end{equation}
and each side of $Q_1$ has length:
$$\frac{\sqrt{a^6+7a^4b^2+7a^2b^4+b^6}}{a^2+b^2}.$$
Note that $Q_1$ is still centered at the origin since the diagonals of $Q_1$ intersect there. Also, since $0\in l_1$, we have that $0^\#=0$.

To get to the rectangle $Q_2$ from $Q_1$, the symmetrization will be performed about the line $l_2$ below:
$$l_2=\left\{(x,y)\in\R^2:y=-\frac{a^3+3ab^2}{b^3-a^2b}x\right\}$$
which is perpendicular to the line segments $\overline{Q_1Q_2}$ and $\overline{Q_3Q_4}$. This forms a rectangle $Q_2$ centered at the origin with side lengths:
\begin{equation}\label{newSides}
b'=4ab\sqrt{\frac{a^2+b^2}{a^4+6a^2b^2+b^4}}\hspace{.5cm}\text{and}\hspace{.5cm}a'=\sqrt{\frac{a^4+6a^2b^2+b^4}{a^2+b^2}}.
\end{equation}
To see that the difference between the side lengths has decreased, consider the recursive definitions below based off of (\ref{newSides}):
\begin{equation*}
b_{m+1}=4a_mb_m\sqrt{\frac{a_m^2+b_m^2}{a_m^4+6a_m^2b_m^2+b_m^4}}\hspace{.5cm}\text{and}\hspace{.5cm}a_{m+1}=\sqrt{\frac{a_m^4+6a_m^2b_m^2+b_m^4}{a_m^2+b_m^2}}.
\end{equation*}
We will next look at the quotient $b_m/a_m$ in the following way:
$$\frac{b_{m+1}}{a_{m+1}}=4a_mb_m\left(\frac{a_m^2+b_m^2}{a_m^4+6a_m^2b_m^2+b_m^4}\right)=\frac{b_m}{a_m}\left(\frac{4a_m^2(a_m^2+b_m^2)}{a_m^4+6a_m^2b_m^2+b_m^4}\right)$$
so that if we let $c_m=b_m/a_m$, then we can rewrite the above recursive relation as:
$$c_{m+1}=\frac{4c_m+4c_m^3}{1+6c_m^2+c_m^4}.$$
From this, consider the function:
$$f(c)=\frac{4c^3+4c}{c^4+6c^2+1}.$$
Using methods from Calculus, one can see that $f(c)$ increases on $(0,1)$ and decreases on $(1,\infty)$. Hence with $f(1)=1$, we have that the sequence $c_m$ satisfies $c_0\in(0,\infty)$ and $c_m<1$ for all $m\ge1$. This yields that $c_m$ is a bounded increasing sequence for $m\ge 1$, so it will converge to a fixed point of $f(c)$; however, for $c>0$, the only positive fixed point of $f(c)$ is 1, so it must hold that $b_m/a_m=c_m\rightarrow1$.

Now, with the sides of the rectangles in $Q_m$ converging to the same length, the distance between the line of symmetrization $y=-\frac{a_m}{b_m}x$ and the endpoints of the diagonal formed by $(a_m,-b_m)$ and $(-a_m,b_m)$ converge to 0 as $m\rightarrow\infty$. Further, by considering the points of a rhombus $Q_m$ with vertices of the form described in (\ref{rhombusPts}), the fact that $b_m/a_m\rightarrow1$ yields that the angle at any vertex on the rhombus converges to $\pi/2$; thus the rhombi in $Q_m$ also converge to a square of the same area. With these in mind, in a similar fashion as in the triangle case in Section \ref{AppsTriangle}, for given $\eta>0$, there exists $M\ge1$ such that the quantities $\sup_{x\in Q_{m_1}}d(x,Q_{m_2})$ and $\sup_{x\in Q_{m_2}}d(Q_{m_1},x)$ are both bounded by $\eta$ for ${m_1},{m_2}\ge M$. Hence, $\{Q_m\}$ is a Cauchy sequence with respect to $d_\Hcal$ in a compact metric space, and so the sequence must converge with respect to $d_\Hcal$. Since the ratio of the sides of $Q_m$ converge to 1, $Q_m$ must converge to a square $Q'$ with respect to $d_\Hcal$.
\end{proof}
The above proof helps to establish that:
$$\P_0(\tau^\alpha_R>t)\le\P_0(\tau^\alpha_{Q_1}>t)\le\P_0(\tau^\alpha_{Q_2}>t)\le\cdots$$
since the orthogonal projection of the origin in each symmetrization is itself (with the origin being on each line of symmetrization). Hence the sequence $x_m=0$ is constant and so converges to $0\in\R^2$. To see that each $Q_m$ satisfies the $\varepsilon$-cone property, first note that the minimum side length $l_m$ of each rectangle in $Q_m$ increases to the square root of the area of $Q_m$. For the rhombi in $Q_m$, the smaller angle $\alpha_m$ in $Q_m$ is at most that of $Q_{m+2}$, the next rhombus in the sequence; the same goes for the distance $\beta_m$ between the larger angle of $Q_m$ and the line segment connecting the vertices with the smaller angle. This yields that $\alpha_m$ and $\beta_m$ increase respectively to $\pi/2$ and $\sqrt{A/2}$ where $A$ is the area of $Q'$. Hence, if $Q_1$ and $Q_2$ satisfy the $\varepsilon$-cone property where $\varepsilon=\frac{1}{4}\min\{l_1,\alpha_2,\beta_2\}$, then so does each $Q_m$. Thus, using Corollary \ref{MainResult2-SS}, we have (\ref{quadIneq2}), and hence (\ref{quadIneq}), as desired.

\section*{Acknowledgements}
The author would like to thank Professor Rodrigo Ba\~nuelos for suggesting the problem as well as his valuable insights and time while preparing this paper. Author was supported in part by NSF Grant \#DMS-1854709 under PI Rodrigo Ba\~nuelos and is part of the author's PhD Thesis.

\bibliography{bibtex}{}
\bibliographystyle{plain}

\end{document}